\theoremstyle{plain}
\newtheorem{Thm}{Theorem}[section]
\newtheorem{Thm*}{Theorem}[section]
\newtheorem{Prop}[Thm]{Proposition}
\newtheorem{Lem}[Thm]{Lemma}
\newtheorem{Cl}[Thm]{Claim}
\theoremstyle{definition}
\newtheorem{Emp}[Thm]{}
\newcommand{\sotimes}{\overset{\centerdot}{\otimes}}
\newcommand{\fa}{\mathfrak a}
\numberwithin{equation}{section}
\newcommand{\qlbar}{\overline{\B{Q}}_{\ell}}
\newcommand{\om}{\omega}
\newcommand{\La}{\Lambda}
\newcommand{\ov}{\overline}
\newcommand{\B}[1]{\mathbb#1}
\newcommand{\cal}[1]{\mathcal{#1}}
\newcommand{\C}[1]{\cal#1}
\newcommand{\isom}{\overset {\thicksim}{\to}}
\newcommand{\form}[1]{(\ref{Eq:#1})}
\newcommand{\ka}{\kappa}
\newcommand{\st}{\operatorname{st}}
\newcommand{\Ind}{\operatorname{Ind}}
\newcommand{\Bun}{\operatorname{Bun}}
\newcommand{\Perf}{\operatorname{Perf}}
\newcommand{\FS}{\operatorname{FS}}
\newcommand{\lis}{\operatorname{lis}}
\newcommand{\si}{\sigma}
\newcommand{\lra}{\longrightarrow}
\newcommand{\hra}{\hookrightarrow}
\newcommand{\wt}{\widetilde}
\newcommand{\wh}{\widehat}
\newcommand{\Gm}{\Gamma}
\newcommand{\gm}{\gamma}
\newcommand{\Dt}{\Delta}
\newcommand{\lan}{\langle}
\newcommand{\ran}{\rangle}
\newcommand{\al}{\alpha}
\newcommand{\la}{\lambda}
\newcommand{\rl}[1]{Lemma~\ref{L:#1}}
\newcommand{\rp}[1]{Proposition~\ref{P:#1}}
\newcommand{\re}[1]{\ref{E:#1}}
\newcommand{\rt}[1] {Theorem~\ref{T:#1}}
\newcommand{\Out}{\operatorname{Out}}
\newcommand{\sr}{\operatorname{sr}}
\newcommand{\on}{\operatorname}
\newcommand{\inv}{\operatorname{inv}}
\newcommand{\pr}{\operatorname{pr}}
\newcommand{\Ker}{\operatorname{Ker}}
\newcommand{\Irr}{\operatorname{Irr}}
\newcommand{\Spec}{\operatorname{Spec}}
\newcommand{\Aut}{\operatorname{Aut}}
\newcommand{\res}{\operatorname{res}}
\newcommand{\Ad}{\operatorname{Ad}}
\newcommand{\Gal}{\operatorname{Gal}}
\newcommand{\Tr}{\operatorname{Tr}}
\newcommand{\Lie}{\operatorname{Lie}}
\newcommand{\ad}{\operatorname{ad}}
\newcommand{\Id}{\operatorname{Id}}
\newcommand{\Rep}{\operatorname{Rep}}
\newcommand{\Par}{\operatorname{Par}}
\newcommand{\End}{\operatorname{End}}
\newcommand{\QCoh}{\operatorname{QCoh}}
\newcommand{\pt}{\operatorname{pt}}
\newcommand{\Span}{\operatorname{Span}}
\begin{document}

\title[Endoscopic decomposition of elliptic Fargues--Scholze $L$-packets]%
{Endoscopic decomposition of elliptic Fargues--Scholze $L$-packets}
\author{David Kazhdan}

\author{Yakov Varshavsky}
\address{Einstein Institute of Mathematics\\
Edmond J. Safra Campus\\
The Hebrew University of Jerusalem\\
Givat Ram, Jerusalem, 9190401, Israel}
\email{kazhdan@math.huji.ac.il, yakov.varshavsky@mail.huji.ac.il}

\date{\today}

\thanks{The research was partially supported by ERC grant 101142781. The research of Y.V. was partially supported by the
ISF grant 2091/21.}
\begin{abstract}
The main goal of this note is to show that the local $L$-packets of Fargues--Scholze
\cite{FS}, corresponding to elliptic $L$-parameters, admit an endoscopic decomposition.
Our argument is strongly motivated by a beautiful paper of Chenji Fu \cite{Fu}, where
the stable case is proven.

To put our results in a more general context, we also construct a general endoscopic decomposition over $\B{C}$
based on results of Arthur, and a generalization of this decomposition over an arbitrary
algebraically closed field of characteristic zero based on a recent work \cite{KSV}.
\end{abstract}
\maketitle


\tableofcontents

\section*{Formulation of results}

\begin{Emp}
{\bf Notation.}
(a) Let $F$ be a local non-archimedean field of characteristic zero, let $\Gm:=\Gm_F$ be the absolute Galois group of $F$, let $W_F\subseteq\Gm$ be the Weil group of $F$, and let $\ell$ a prime different from the residue characteristic of $F$. Untill Section~\ref{ell}, all functions will be $\qlbar$-valued.

\smallskip

(b) Let $G$ be a connected reductive group over $F$, and let $\wh{G}$ the connected Langlands dual group over $\qlbar$. Then $\wh{G}$ is equipped with an action of $\Gm$ (canonical up to a conjugacy), so we can form the semidirect product ${}^L G:=\wh{G}\rtimes W_F$.

\smallskip

(c) Let $G^{\on{sr}}\subseteq G$ be the (open) locus of strongly regular semisimple elements of $G$, and let $G(F)^{\on{ell}}\subseteq G^{\on{sr}}(F)$ be the (open) locus of elliptic elements.

\smallskip

(d) Let $C^{\infty}_{\inv}(G^{\on{sr}}(F))$ and $C^{\infty}_{\inv}(G(F)^{\on{ell}})$ be the spaces of $\Ad G(F)$-invariant locally constant functions on $G^{\on{sr}}(F)$ and $G(F)^{\on{ell}}$, respectively, and let $\wh{C}_{\inv}(G(F))$ be the space of $\Ad G(F)$-invariant generalized functions on $G(F)$.
\end{Emp}

\begin{Emp} \label{E:smooth}
{\bf Smooth irreducible representations.} (a) We denote by $\Irr(G(F))$ the set of isomorphism classes of smooth irreducible representations $G(F)$ over $\qlbar$.

\smallskip

(b) For every $\pi\in \Irr(G(F))$, we denote its character by $\chi_{\pi}\in \wh{C}_{\inv}(G(F))$, and we denote by
$\Theta(G(F))\subseteq \wh{C}_{\inv}(G(F))$ the $\qlbar$-linear span of $\{\chi_{\pi}\}_{\pi\in \Irr(G(F))}$.

\smallskip

(c) By a theorem of Harish--Chandra, the restriction $f^{\on{sr}}:=f|_{G^{\on{sr}}(F)}$ to $G(F)^{\on{sr}}$ of every $f\in \Theta(G(F))$ is a locally constant function, and thus defines an element of ${C}^{\infty}_{\inv}(G^{\on{sr}}(F))$. We denote by $f^{\on{ell}}\in C^{\infty}_{\inv}(G(F)^{\on{ell}})$ the restriction of $f$ to $G(F)^{\on{ell}}$.
\end{Emp}

\begin{Emp} \label{E:lpackets}
{\bf Fargues--Scholze $L$-packets.}
(a) To every $\pi\in\Irr(G(F))$, Fargues and Scholze \cite{FS} associate a semisimple Langlands parameter
$\varphi_{\pi}:W_F\to {}^L G(\qlbar)$.

\smallskip

(b) Conversely, to every semisimple $L$-parameter $\varphi$ we associate a subset
\[
\Pi^{FS}_{\varphi}:=\{\pi\in \Irr(G(F))\,|\, \varphi^{FS}_{\pi}=\pi\}
\]
and call it the Fargues--Scholze $L$-packet.

\smallskip

(c) For every $L$-parameter $\varphi$ as in part~(b), we denote by $\Theta_{\varphi}\subseteq\Theta(G(F))$ the span of $\{\chi_{\pi}\}_{\pi\in\Pi^{FS}_{\varphi}}$. 

\end{Emp}

\begin{Emp} \label{E:elliptic}
{\bf Elliptic Langlands parameters.}
(a) For every (semisimple) $L$-parameters $\varphi$ (see Section~\re{lpackets}(a)), we denote by $S_{\varphi}:=Z_{\wh{G}}(\varphi)\subseteq \wh{G}(\qlbar)$ the centralizer of $\varphi$. Then $S_{\varphi}\supseteq Z(\wh{G})^{\Gm}$, and we set
$\ov{S}_{\varphi}:=S_{\varphi}/Z(\wh{G})^{\Gm}\subseteq \wh{G}^{\ad}$.

\smallskip

(b) Assume now that the $L$-parameter $\varphi$ is {\em elliptic}, by which we mean that the quotient
$\ov{S}_{\varphi}:=S_{\varphi}/Z(\wh{G})$ is finite.

\smallskip

(c) Following Langlands \cite{La}, every $\ov{S}_{\varphi}$-conjugacy class $[\ka]\subseteq \ov{S}_{\varphi}$ gives rise to the endoscopic datum $\C{E}_{\varphi,[\ka]}$ of $G$, unique up to an equivalence (see Section~\re{enddatum}). Thus, we can talk about  $\C{E}_{\varphi,[\ka]}$-stable generalized functions (see Section~\re{estable}).
\end{Emp}

\begin{Emp} \label{E:spact}
{\bf The canonical decomposition.} (a) Let $\qlbar[\ov{S}_{\varphi}]^{\ov{S}_{\varphi}}$ be the algebra of $\qlbar$-valued functions on $\ov{S}_{\varphi}$, invariant under conjugation. The spectral action of Fargues--Scholze induces an action of the $\qlbar$-algebra $\qlbar[\ov{S}_{\varphi}]^{\ov{S}_{\varphi}}$ on the $\qlbar$-vector space $\Theta_{\varphi}$ (see Section~\re{main}).

\smallskip

(b) For every $\ov{S}_{\varphi}$-conjugacy class $[\ka]\subseteq \ov{S}_{\varphi}$, we denote by $1_{[\ka]}\in \qlbar[\ov{S}_{\varphi}]^{\ov{S}_{\varphi}}$ the characteristic function of $[\ka]$. Then we have an identity
$\sum_{[\ka]}1_{[\ka]}=1\in \qlbar[\ov{S}_{\varphi}]^{\ov{S}_{\varphi}}$.

\smallskip

(c) Combining observations of parts~(a) and (b), for every $f\in \Theta_{\varphi}$ we have a natural decomposition $f=\sum_{[\ka]}f_{[\ka]}$, where for every $\ov{S}_{\varphi}$-conjugacy class $[\ka]\subseteq \ov{S}_{\varphi}$, we set $f_{[\ka]}:=1_{[\ka]}(f)\in\Theta_{\varphi}$.
\end{Emp}

Now we are ready to formulate the main result of this note.

\begin{Thm} \label{T:main}

For every elliptic $L$-parameter $\varphi$, every $f\in \Theta_{\varphi}$ and every $[\ka]\subseteq \ov{S}_{\varphi}$, the generalized function
$f_{[\ka]}\in \Theta_{\varphi}$ is $\C{E}_{\varphi,[\ka]}$-stable.
\end{Thm}

\begin{Emp}
{\bf Remark.} Both \rt{main} and its proof were motivated by a recent work of Chenji Fu \cite{Fu}, who established  it in the case $[\ka]=1$. 
On the other hand, even in the case $[\ka]=1$ our argument is slightly different. Namely, instead of equi-distribution properties of the weight multiplicities of highest weight representations of a reductive group we use the fact that the space of class functions
is generated by traces of irreducible representations. 

%
%
%
\end{Emp}

\begin{Emp}
{\bf Outline of the argument.} Our proof of \rt{main} consists of three steps. First, we use (a reformulation of) the character formula of Hansen--Kaletha--Weinstein \cite{HKW} to show the $\C{E}_{\varphi,[\ka]}$-stability  of the restriction $f^{\on{ell}}_{[\ka]}:=(f_{[\ka]})|_{G(F)^{\on{ell}}}$ when the center $Z(G)$ is connected. Then, we use $z$-embeddings of Kaletha \cite{Ka} to establish the $\C{E}_{\varphi,[\ka]}$-stability  of $f^{\on{ell}}_{[\ka]}$ in general. Finally, we use the results of Arthur \cite{Ar1, Ar2} in the formulation of Moeglin--Waldspurger \cite{MW} in order to deduce the $\C{E}_{\varphi,[\ka]}$-stability of $f_{[\ka]}$ from the $\C{E}_{\varphi,[\ka]}$-stability of $f^{\on{ell}}_{[\ka]}$.
\end{Emp}

The decomposition in \rt{main} is a particular case of the following more general phenomenon:

\begin{Emp} \label{E:enddec0}
{\bf General endoscopic decomposition.} (a) Using results of Arthur, for every field isomorphism $\iota:\qlbar\isom\B{C}$, the space $\Theta(G(F))$ has a natural decomposition
\[
\Theta(G(F))=\bigoplus_{\C{E}}\Theta_{\C{E}}(G(F)),
\]
where $\C{E}$ runs over equivalences classes of endoscopic data (see Appendix~\ref{end}).

\smallskip

(b) More precisely, the decomposition of part~(a) is obtained from a combination of a decomposition of $\Theta(G(F))$ in terms of
virtual elliptic characters in the sense of Arthur for different Levi subgroups of $G$, and a decomposition of the span of virtual elliptic characters in terms of elliptic endoscopic data.

\smallskip

(c) (A version of) Langlands conjecture asserts that for every semisimple $L$-parameter $\varphi$ and every field isomorphism $\iota$, the subspace $\Theta_{\varphi}$ of Section~\re{lpackets} is {\em compatible with the decomposition} of part~(a), that is, we have a decomposition
\[
\Theta_{\varphi}=\bigoplus_{\C{E}}(\Theta_{\C{E}}(G(F))\cap \Theta_{\varphi}).
\]

\smallskip

(d) \rt{main} establishes (a more precise version of) the conjecture of part~(c) for elliptic $L$-parameters.
\end{Emp}

\begin{Emp} \label{E:indep}
{\bf Independence of $\iota$.} (a) A natural question is whether the decomposition of Section~\re{enddec0}(a) is independent of the field isomorphism $\iota$. More generally, one can ask whether this decomposition can be defined over an arbitrary algebraically closed field of zero characteristic. 

\smallskip

(b) In Appendix~\ref{S:arb} we give a positive answer to the question of part~(a) based on a recent work \cite{KSV}. This can be considered to be a counterpart of the recent result of Scholze \cite{Sc}, asserting that the Fargues--Scholze correspondence can be done motivically and thus is independent of $\ell$ and $\iota$.
\end{Emp}

\begin{Emp}
{\bf Plan of the paper.} The paper is organized as follows. First, in Section~1, we review the definition of $\C{E}$-stability. After that,
in Section~2, we give a proof of \rt{main} assuming Propositions~\ref{P:HKW},
\ref{P:zemb} and \ref{P:ell}. Next, in Section~3, we review basic properties of the spectral action of \cite{FS} and use them to construct the
action of the algebra $\qlbar[\ov{S}_{\varphi}]^{\ov{S}_{\varphi}}$ on $\Theta_{\varphi}$, described in Section~\re{spact}. Then, in Sections~4-6, we provide proofs of Propositions~\ref{P:HKW}, \ref{P:zemb} and \ref{P:ell}, respectively.

Finally, in two appendices we provide a construction of the endoscopic decomposition over $\B{C}$ mentioned above, and of the decomposition over an arbitrary algebraically closed field of characteristic zero, respectively.
\end{Emp}

\begin{Emp}
{\bf Acknowledgements.} Our work own its existence to a beautiful work of Chenji Fu \cite{Fu}, in which the stable case is proven. We thank David Hansen, Peter Scholze and Sandeep Varma for their interest and helpful conversations. We also thank Jean--Loup Waldspurger for his explanations and references, and Chenji Fu for his comments on the first draft. Finally, we thank Maarten Solleveld for his collaboration on the paper \cite{KSV} on which Appendix~~\ref{S:arb} is based. Part of the work has been carried out when the second author visited the MPIM Bonn.
\end{Emp}

\section{$\C{E}$-stability}

\begin{Emp} \label{E:dualgp}
{\bf Langlands dual groups.}

\smallskip

(a) For every connected reductive group $H$ over $F$, we denote by $\wh{H}$  \label{a:ghat} the (connected) Langlands dual group over $\qlbar$. Then $\wh{H}$ is equipped with a canonical continuous homomorphism $\rho_H:\Gm\to \on{Out}(\wh{H})$. In particular, we have a natural action of
$\Gm$ on $Z(\wh{H})$, so we can consider the group of fixed points $Z(\wh{H})^{\Gm}$. Moreover, $\rho_H$ lifts to an action of $\Gm$ on $\wh{H}$, canonically up to an $\wh{H}$-conjugacy.

\smallskip

(b) Every embedding  $\fa:T\hra H$ of a maximal torus over $F$ gives rise to a $\Gm$-invariant $\wh{H}$-conjugacy class of embeddings
$[\wh{\fa}]:\wh{T}\hra \wh{H}$ of maximal tori, hence to a canonical $\Gm$-equivariant embedding $Z(\wh{H})\hra\wh{T}$.

\end{Emp}

\begin{Emp} \label{E:bor}
{\bf Galois cohomology.}

\smallskip

(a) As it was shown by Borovoi \cite{Bo}, for every connected reductive group $H$ over $F$, the Galois cohomology
$H^1(F,H)$ has the natural structure of a finite abelian group isomorphic to $\pi_1(H)_{\Gm,\on{tor}}$, where $\pi_1$ stands for the algebraic fundamendal group, $(-)_{\Gm}$ stands for coinvariants, and $(-)_{\on{tor}}$ stands for torsion. Furthermore, the isomorphism
$H^1(F,H)\isom\pi_1(H)_{\Gm,\on{tor}}$ is functorial in $H$.

\smallskip

(b) Part~(a) implies that we have a natural isomorphism $H^1(F,H)^{\vee}\simeq \pi_0(Z(\wh{H})^{\Gm})$, where $(-)^{\vee}$ stands for the dual abelian group. Moreover, for every maximal torus $T\subseteq H$, we have a canonical isomorphism
\[
\Ker[H^1(F,T)\to H^1(F,H)]^{\vee}\simeq \pi_0(\wh{T}^{\Gm}/Z(\wh{H})^{\Gm}),
\]
first established by Kottwitz \cite{Ko}.
\end{Emp}

\begin{Emp} \label{E:stconj}
{\bf Stable conjugacy class.} Fix $\gm\in G^{\on{sr}}(F)$.

\smallskip

(a) We denote by $\on{Orb}^{\st}_{\gm}:=\on{Orb}^{\on{st}}_{G(F)}(\gm)\subseteq G(F)^{\on{ell}}$ the {\em stable  conjugacy class} of $\gm$,
that is, the set of all $\gm'\in G(F)$ which are $G(\ov{F})$-conjugate to $\gm$, and let $[\on{Orb}^{\st}_{\gm}]$ be the set of $G(F)$-orbits in $\on{Orb}^{\st}_{\gm}$.

\smallskip

(b) Let $G_{\gm}\subseteq G$ be the centralizer of $\gm$, and consider a finite abelian group
\[
A_{\gm} :=\Ker[H^1(F, G_{\gm})\to H^1(F, G)].
\]
Then $[\on{Orb}^{\st}_{\gm}]$ has a natural stucture of an $A_{\gm}$-torsor. Namely, for $\ov{\gm}',\ov{\gm}''\in [\on{Orb}^{\st}_{\gm}]$ one can form
a relative position $\inv(\ov{\gm}',\ov{\gm}'')\in A_{\gm}$ (compare \cite[Section 4.1.2]{BV}), and the action of $A_{\gm}$ on $[\on{Orb}^{\st}_{\gm}]$ is defined by the rule $\inv(\ov{\gm}',\ov{\gm}'')(\ov{\gm}')=\ov{\gm}''$.

\smallskip

(c) Let $\wh{G}_{\gm}$ be the connected Langlands dual group of $G_{\gm}$. Then, by Section~\re{bor}(b), the dual abelian group $A_{\gm}^{\vee}$ is canonically isomorphic to $\pi_0((\wh{G}_{\gm})^{\Gm}/Z(\wh{G})^{\Gm})$. In particular, every element $\xi\in (\wh{G}_{\gm})^{\Gm}/Z(\wh{G})^{\Gm}$ defines an element $\ov{\xi}\in A_{\gm}^{\vee}$.

\end{Emp}

\begin{Emp} \label{E:notstconj}
{\bf Notation.} Fix $\gm\in G^{\on{sr}}(F)$.

\smallskip

(a) Let $\on{Fun}([\on{Orb}^{\st}_{\gm}])$ be the space of $\qlbar$-valued functions on $[\on{Orb}^{\st}_{\gm}]$.
Then the finite abelian group $A_{\gm}$ has a natural action on $\on{Fun}([\on{Orb}^{\st}_{\gm}])$ such that for every $\la\in A_{\gm}$, $\ov{\gm}'\in[\on{Orb}^{\st}_{\gm}]$  and $f\in \on{Fun}([\on{Orb}^{\st}_{\gm}])$ we have
$\la(f)(\ov{\gm}')=f(\la(\ov{\gm}'))$.

\smallskip

(b) For $\ov{\xi}\in A_{\gm}^{\vee}$, let $\on{Fun}([\on{Orb}^{\st}_{\gm}])_{\ov{\xi}}\subseteq \on{Fun}([\on{Orb}^{\st}_{\gm}])$ be the subspace of $\ov{\xi}$-eigenvectors of $A_{\gm}$. Then each $\on{Fun}([\on{Orb}^{\st}_{\gm}])_{\ov{\xi}}$ is a one dimensional $\qlbar$-vector space, and
we have a decomposition $\on{Fun}([\on{Orb}^{\st}_{\gm}])=\bigoplus_{\ov{\xi}\in A_{\gm}^{\vee}}\on{Fun}([\on{Orb}^{\st}_{\gm}])_{\ov{\xi}}$.

\smallskip

(c) For a semisimple conjugacy class $[s]\subseteq\wh{G}^{\ad}$, let $\on{Fun}([\on{Orb}^{\st}_{\gm}])_{[s]}\subseteq \on{Fun}([\on{Orb}^{\st}_{\gm}])$ be the span of $\on{Fun}([\on{Orb}^{\st}_{\gm}])_{\ov{\xi}}$, where $\xi$ runs over $[(\wh{G}_{\gm})^{\Gm}/Z(\wh{G})^{\Gm}]\cap [s]\subseteq\wh{G}^{\ad}$, and, as in Section~\re{stconj}(c), $\ov{\xi}\in A_{\gm}^{\vee}$ denotes the
class of $\xi\in (\wh{G}_{\gm})^{\Gm}/Z(\wh{G})^{\Gm}$.

\smallskip

(d) For every function $f\in C_{\inv}(G^{\sr}(F))$, its restriction
$\res_{\gm}(f):=f|_{\on{Orb}^{\st}_{\gm}}$ is a $G(F)$-invariant function on $\on{Orb}^{\st}_{\gm}$, and thus defines an element of
$\on{Fun}([\on{Orb}^{\st}_{\gm}])$.

\end{Emp}

\begin{Emp} \label{E:enddatum}
{\bf Endoscopic data} (compare \cite[$\S$7]{Ko} and \cite[Section~1.3]{KV}).
 Suppose we are in the situation of Section~\re{dualgp}.

\smallskip

(a) Slightly modifying the terminology of \cite[$\S$7]{Ko}, by an {\em endoscopic datum}  \label{a:enddatum} for $G$, we mean a triple $\C{E}=(s,\wh{H},\rho)$, where $s\in\wh{G}^{\ad}$ is a semisimple element, $\wh{H}:=Z_{\wh{G}}(s)^0\subseteq \wh{G}$ the connected centralizer,  and $\rho:\Gm\to\on{Out}(\wh{H})$ a continuous homomorphism satisfying the following conditions:

\smallskip

\quad\quad$\bullet$ the $\wh{G}$-conjugacy class of the inclusion $\wh{H}\hra\wh{G}$ is $\Gm$-invariant, and thus the inclusion $Z(\wh{G})\hra Z(\wh{H})$ is $\Gm$-invariant;

\smallskip

\quad\quad$\bullet$ we have $s\in Z(\wh{H})^{\Gm}/Z(\wh{G})^{\Gm}\subseteq\wh{G}^{\ad}$.

\smallskip

(b) We call two endoscopic data $\C{E}=(s,\wh{H},\rho)$ and $\C{E}'=(s',\wh{H}',\rho')$ {\em equivalent} and write $\C{E}\simeq\C{E}'$, if there exists $g\in\wh{G}$ such that $\Ad_g(\wh{H})=\wh{H}'$, $\Ad_g\circ\rho=\rho'$ and elements
$\Ad_g(s),s'\in Z(\wh{H}')^{\Gm}$ have the same image in $\pi_0(Z(\wh{H}')^{\Gm}/Z(\wh{G})^{\Gm})$. 

\smallskip

(c) We call an endoscopic datum $\C{E}=(s,\wh{H},\rho)$ {\em elliptic}, if the group $Z(\wh{H})^{\Gm}/Z(\wh{G})^{\Gm}$ is finite.
\end{Emp}

\begin{Emp} \label{E:langl}
{\bf Endoscopic data corresponding to Langlands parameters.} Let $\varphi:W_F\to {}^LG(\qlbar)$ be an $L$-parameter, and let $\ka\in \ov{S}_{\varphi}$ be a semisimple element.

\smallskip

(a) As in \cite{La}, a pair $(\varphi,\ka)$ gives rise to the endoscopic datum $\C{E}_{\varphi,\ka}=(s,\wh{H},\rho)$, where $s:=\ka$ and $\rho:\Gm\to \Out(\wh{H})$ is characterised by the condition that the restriction $\rho|_{W_F}$ is the composition
\begin{equation} \label{Eq:endoscopy}
W_F\overset{\varphi}{\lra}Z_{{}^LG}(s)\overset{\Ad}{\lra}\Aut(\wh{H})\overset{\pr}{\lra}\Out(\wh{H}).
\end{equation}
Namely, it is easy to see that the composition \form{endoscopy} is continuous with a finite image, so it uniquely extends to the whole of $\Gm$.

\smallskip

(b) By the construction of part~(a), we have an equality $Z(\wh{H})^{\Gm}=Z_{Z(\wh{H})}(\varphi)$. Therefore we have an inclusion
$Z(\wh{H})^{\Gm}\subseteq Z_{\wh{G}}(\varphi)=S_{\varphi}$. In particular, for every elliptic $L$-parameter $\varphi$, the endoscopic datum $\C{E}_{\varphi,\ka}$ is elliptic.


\end{Emp}

\begin{Emp} \label{E:estable}
{\bf $\C{E}$-stable functions.} Let $\C{E}=(s,\wh{H},\rho)$ be an endoscopic datum for $G$.

\smallskip

(a) Let $\gm\in G^{\sr}(F)$, and let $\fa_{\gm}:G_{\gm}\hra G$ be the inclusion. For an element $\xi\in (\wh{G}_{\gm})^{\Gm}/Z(\wh{G})^{\Gm}$, we say that $(\gm,\xi)\in\C{E}$ if  there exists an embedding $\wh{\fa_{\gm}}:\wh{G}_{\gm}\hra \wh{G}$ in $[\wh{\fa_{\gm}}]$ (see Section~\re{dualgp}(b)) such that

\smallskip

\quad\quad$\bullet$  we have $\wh{\fa_{\gm}}(\xi)=s$, thus  $\wh{\fa_{\gm}}(\wh{G}_{\gm})\subseteq\wh{H}$;

\smallskip

\quad\quad$\bullet$ the $\wh{H}$-conjugacy class of the inclusion $\wh{\fa_{\gm}}:\wh{G}_{\gm}\hra\wh{H}$ is $\Gm$-invariant.

\smallskip

(b) We say that a function $f\in\on{Fun}([\on{Orb}^{\st}_{\gm}])$ is {\em $\C{E}$-stable}, if it belongs to the span of the subspaces
$\on{Fun}([\on{Orb}^{\st}_{\gm}])_{\ov{\xi}}$, where $\xi$ runs over all elements of $(\wh{G}_{\gm})^{\Gm}/Z(\wh{G})^{\Gm}$ such that $(\gm,\xi)\in\C{E}$.

\smallskip

(c) We say that a function $f\in C^{\infty}_{\inv}(G^{\sr}(F))$ (resp. $f\in C^{\infty}_{\inv}(G^{\on{ell}}(F))$) is {\em $\C{E}$-stable}, if its restriction $f|_{\on{Orb}^{\st}_{\gm}}\in\on{Fun}([\on{Orb}^{\st}_{\gm}])$ is $\C{E}$-stable for every $\gm\in G^{\sr}(F)$
(resp. $f\in C^{\infty}_{\inv}(G^{\on{ell}}(F))$).  We denote the space of $\C{E}$-stable invariant functions by
$C^{\infty}_{\C{E}-\st}(G(F)^{\on{ell}})\subset C^{\infty}_{\inv}(G(F)^{\on{ell}})$.

\smallskip

(d) Note that a function $f\in C^{\infty}_{\inv}(G^{\sr}(F))$, whose support is closed in $G(F)$, defines a generalized function on $G(F)$.
We say that an element $h$ in the Hecke algebra $\C{H}(G(F))$ is {\em $\C{E}$-unstable}, if for every $f\in C^{\infty}_{\inv}(G^{\sr}(F))$, whose support is closed in $G(F)$, we have $\lan f, h\ran=0$.

\smallskip

(e) We say that a generalized function  $f\in \wh{C}_{\inv}(G(F))$ is {\em $\C{E}$-stable}, if its satisfies $\lan f, h\ran=0$ for every $\C{E}$-unstable $h\in\C{H}(G(F))$. We denote the space of $\C{E}$-stable invariant generalized functions by $\wh{C}_{\C{E}-\st}(G(F))\subseteq\wh{C}_{\inv}(G(F))$.
\end{Emp}

\begin{Emp} \label{E:stable}
{\bf Remarks.} (a) $\C{E}$-stability only depends on the equivalence class of $\C{E}$.

\smallskip

(b) For every connected reductive group $G$, we have a trivial endoscopic datum $\C{E}_{G,\on{triv}}=\C{E}_{\on{triv}}:=(1,\wh{G},\rho_G)$. Then a function  $f\in C^{\infty}_{\inv}(G^{\sr}(F))$ is $\C{E}_{\on{triv}}$-stable if and only if
its restriction to each stable orbit $\on{Orb}^{\st}_{\gm}$ is constant, and such functions are usually called {\em stable}.  \label{a:stable}

\smallskip

(c) Note that if  $f\in C^{\infty}_{\inv}(G^{\sr}(F))$ is $\C{E}$-stable, then its restriction $f|_{\on{Orb}^{\st}_{\gm}}$ belongs to $\on{Fun}([\on{Orb}^{\st}_{\gm}])_{[s]}$ for every $\gm\in G^{\sr}(F)$. Furthermore, the converse assertion also holds, if the center $Z(G)$ of $G$ is connected (see \cite[Section~4.1.9]{BV}).

\smallskip

(d) For every pair $(\gm,\xi)$ as in Section~\re{estable}(a) there exists a unique endoscopic datum
$\C{E}=(s,\wh{H},\rho)$ such that $(\gm,\xi)\in\C{E}$ (compare \cite[Sections~4.1.7(a),(b)]{BV}). Furthermore,
$\C{E}$ is elliptic, if $\gm\in G(F)$ is elliptic.

\smallskip

(e) In words, an invariant generalized function is $\C{E}$-stable, if it lies in the closure of the span of all $\C{E}$-stable functions on
$G^{\sr}(F)$ having closed support in $G(F)$. In particular, we do not have to choose Haar measures in order to define this notion.

\smallskip

(f) Following \cite[Section~1.6.4]{KV}, for every $\gm\in G^{\sr}(F)$ we choose a Haar measure on $G_{\gm}(F)$ such that the total measure of the
maximal compact subgroup of $G_{\gm}(F)$ is one, and define an orbital integral $O_{\gm}\in\wh{C}_{\inv}(G(F))$ and a $\ka$-orbital integral
$O^{\ov{\xi}}_{\gm}\in\wh{C}_{\inv}(G(F))$ for every $\xi\in\pi_0((\wh{G}_{\gm})^{\Gm}/Z(\wh{G})^{\Gm})$. Then one can show that an invariant generalized function is $\C{E}$-stable if and only if it lies in the closure of the span of $\ka$-orbital integrals $\{O^{\ov{\xi}}_{\gm}\}_{(\gm,\xi)\in\C{E}}$. In particular, our definition coincides with the classical one.
\end{Emp}

As it will be explained in Section~\ref{ell}, the following result is an easy consequence of results of \cite{Ar1, Ar2} and \cite{MW}.

\begin{Prop} \label{P:ell}
Let $\pi_1,\ldots,\pi_n\in \Irr(G(F))$ be supercuspidal representations, let $\C{E}$ be an elliptic endoscopic datum for $G$,  and let $f=\sum_{i=1}^n a_i \chi_{\pi_i}\in\Theta(G(F))$ be a linear combination of the $\chi_{\pi_i}$'s such that the restriction
$f^{\on{ell}}\in C^{\infty}_{\inv}(G(F)^{\on{ell}})$ is $\C{E}$-stable. Then $f$ is $\C{E}$-stable.
\end{Prop}







\section{Proof of the Main Theorem}

\begin{Emp} \label{E:chev}
{\bf The Chevalley space.}

\smallskip

(a) Let $c_{\wh{G}^{\ad}}:=\Spec \qlbar[\wh{G}^{\ad}]^{\wh{G}^{\ad}}$ be the Chevalley space of $\wh{G}^{\ad}$, where $\qlbar[\wh{G}^{\ad}]^{\wh{G}^{\ad}}$ is the algebra of class functions, and let $\pr_{\wh{G}^{\ad}}:\wh{G}^{\ad}\to c_{\wh{G}^{\ad}}$ be the projection, corresponding to the inclusion $\qlbar[\wh{G}^{\ad}]^{\wh{G}^{\ad}}\hra \qlbar[\wh{G}^{\ad}]$.

\smallskip

(b) For every semisimple conjugacy class $[s]\subseteq\wh{G}^{\ad}$, the map $h\mapsto h([s])$ defines a character
$\zeta_{[s]}:\qlbar[c_{\wh{G}^{\ad}}]=\qlbar[\wh{G}^{\ad}]^{\wh{G}^{\ad}}\to\qlbar$. Conversely, every character $\qlbar[c_{\wh{G}^{\ad}}]\to \qlbar$ is obtained in this way.
\end{Emp}

\begin{Emp} \label{E:can}
{\bf Canonical homomorphism} (compare \cite[Section~5.3.1]{BV}). Fix $\gm\in G^{\on{sr}}(F)$.

\smallskip

(a) We have a natural embedding $\wh{G}_{\gm}/Z(\wh{G})\hra \wh{G}^{\ad}$, unique up to a conjugacy. Therefore composition $\wh{G}_{\gm}/Z(\wh{G})\hra \wh{G}^{\ad}\overset{\pr_{\wh{G}^{\ad}}}{\lra}c_{\wh{G}^{\ad}}$ and hence also composition
\[
\pr_{\gm}:(\wh{G}_{\gm})^{\Gm}/Z(\wh{G})^{\Gm}\hra \wh{G}_{\gm}/Z(\wh{G})\to c_{\wh{G}^{\ad}}
\]
is independent of all choices.

\smallskip

(b) The map $\pr_{\gm}$ of part~(a) induces an algebra homomorphism
\[
\on{can}_{\gm}:=\pr_{\gm}^*:\qlbar[c_{\wh{G}^{\ad}}]\to \qlbar[(\wh{G}_{\gm})^{\Gm}/Z(\wh{G})^{\Gm}].
\]
\end{Emp}

\begin{Emp} \label{E:ellcase}
{\bf The elliptic case.} Fix $\gm\in G(F)^{\on{ell}}$.

\smallskip

(a) In this case, the group $(\wh{G}_{\gm})^{\Gm}/Z(\wh{G})^{\Gm}$ is finite. Hence, by Section~\re{stconj}(c), the group 
$(\wh{G}_{\gm})^{\Gm}/Z(\wh{G})^{\Gm}$ is canonically identified with $A^{\vee}_{\gm}$, so the algebra of regular functions  $\qlbar[(\wh{G}_{\gm})^{\Gm}/Z(\wh{G})^{\Gm}]$ is canonically identified with the group algebra $\qlbar[A_{\gm}]$. Hence morphism
$\on{can}_{\gm}$ from Section~\re{can}(b) defines a homomorphism of $\qlbar$-algebras $\qlbar[c_{\wh{G}^{\ad}}]\to \qlbar[A_{\gm}]$.

\smallskip

(b) The action of the group $A_{\gm}$ on $\on{Fun}([\on{Orb}^{\st}_{\gm}])$ from Section~\re{notstconj}(a) induces an action of the group algebra $\qlbar[A_{\gm}]$, hence, by part~(a), an action of $\qlbar[c_{\wh{G}^{\ad}}]$.
\end{Emp}

\begin{Lem} \label{L:isot}
Let $\gm\in G(F)^{\on{ell}}$, and let $[s]$ be as in Section~\re{chev}(b). Then a function $f\in \on{Fun}([\on{Orb}^{\st}_{\gm}])$ belongs to $\on{Fun}([\on{Orb}^{\st}_{\gm}])_{[s]}$ (see Section~\re{notstconj}(c)) if and only if we have $h(f)=\zeta_{[s]}(h)\cdot f$ for every $h\in \qlbar[c_{\wh{G}^{\ad}}]$.
\end{Lem}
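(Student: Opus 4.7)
The plan is to compute the action of $\qlbar[c_{\wh{G}^{\ad}}]$ explicitly on each one-dimensional eigenspace $\on{Fun}([O_{\gm}])_{\ov{\xi}}$ and verify that $h\in\qlbar[c_{\wh{G}^{\ad}}]$ acts on it by the scalar $\zeta_{[\xi]}(h)$, where $[\xi]\in c_{\wh{G}^{\ad}}$ is the image of $\xi$. Once this is in hand, the lemma will follow from the eigenspace decomposition of Section~\re{notstconj}(b), together with the fact that $c_{\wh{G}^{\ad}}$ separates semisimple conjugacy classes in the connected reductive group $\wh{G}^{\ad}$.

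First, I would use that, since $\gm$ is elliptic, the group $(\wh{G}_{\gm})^{\Gm}/Z(\wh{G})^{\Gm}$ is finite and canonically identified with $A_{\gm}^{\vee}$ by Section~\re{stconj}(c). Pontryagin duality for the finite abelian group $A_{\gm}$ then identifies the algebra of functions on $A_{\gm}^{\vee}$ with the group algebra $\qlbar[A_{\gm}]$, sending $a\in A_{\gm}$ to the function $\ov{\xi}\mapsto\ov{\xi}(a)$ on $A_{\gm}^{\vee}$. This is precisely the isomorphism $\qlbar[(\wh{G}_{\gm})^{\Gm}/Z(\wh{G})^{\Gm}]\simeq\qlbar[A_{\gm}]$ already invoked in the paper, and keeping this Fourier bookkeeping straight is really the only delicate point of the argument.

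Next, by definition $A_{\gm}$, and hence its group algebra, acts on the one-dimensional eigenspace $\on{Fun}([O_{\gm}])_{\ov{\xi}}$ through the character $\ov{\xi}:\qlbar[A_{\gm}]\to\qlbar$. Under the above identification, this character corresponds to evaluation at $\ov{\xi}\in A_{\gm}^{\vee}=(\wh{G}_{\gm})^{\Gm}/Z(\wh{G})^{\Gm}$. Consequently, for $h\in\qlbar[c_{\wh{G}^{\ad}}]$ and $f\in\on{Fun}([O_{\gm}])_{\ov{\xi}}$, I will obtain
\[
h(f)=\bigl(\on{can}_{\gm}(h)\bigr)(\ov{\xi})\cdot f=h(\pr_{\gm}(\xi))\cdot f=\zeta_{[\xi]}(h)\cdot f,
\]
directly from the definition $\on{can}_{\gm}=\pr_{\gm}^{*}$ in Section~\re{can}.

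From this, both directions of the lemma drop out. The ``only if'' direction is immediate: on $\on{Fun}([O_{\gm}])_{[s]}$, which by Section~\re{notstconj}(c) is spanned by the $\on{Fun}([O_{\gm}])_{\ov{\xi}}$ with $\xi\in[s]$, we have $[\xi]=[s]$, so $h$ acts by $\zeta_{[s]}(h)$. For the converse, I will expand $f=\sum_{\ov{\xi}}f_{\ov{\xi}}$; the hypothesis yields $\bigl(\zeta_{[\xi]}(h)-\zeta_{[s]}(h)\bigr)f_{\ov{\xi}}=0$ for every $h\in\qlbar[c_{\wh{G}^{\ad}}]$ and every $\ov{\xi}$. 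Since distinct semisimple conjugacy classes in $\wh{G}^{\ad}$ have distinct images in $c_{\wh{G}^{\ad}}$, any $\ov{\xi}$ with $f_{\ov{\xi}}\neq 0$ must satisfy $[\xi]=[s]$, and hence $f\in\on{Fun}([O_{\gm}])_{[s]}$.
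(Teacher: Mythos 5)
Your argument is correct and follows essentially the same route as the paper's proof: both identify the $A_{\gm}$-eigencharacter on $\on{Fun}([O_{\gm}])_{\ov{\xi}}$ with the evaluation character at $\xi$, observe via $\on{can}_{\gm}=\pr_{\gm}^*$ that this restricts on $\qlbar[c_{\wh{G}^{\ad}}]$ to $\zeta_{[\xi]}$, and conclude by the eigenspace decomposition and the fact that $c_{\wh{G}^{\ad}}$ separates semisimple conjugacy classes. You have merely spelled out the Fourier-theoretic bookkeeping and the ``if'' direction in more detail than the paper does.
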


\begin{proof}
Every character $\xi\in A_{\gm}^{\vee}$ gives rise to a character $\xi$ of $\qlbar[A_{\gm}]$, hence restricts to a character
$\xi|_{\qlbar[c_{\wh{G}^{\ad}}]}$ of $\qlbar[c_{\wh{G}^{\ad}}]$. Next, using isomorphism 
$A^{\vee}_{\gm}\simeq (\wh{G}_{\gm})^{\Gm}/Z(\wh{G})^{\Gm}$ from Section~\re{ellcase}(a), a character $\xi\in A_{\gm}^{\vee}$ gives rise to a semisimple conjugacy class $[\xi]\subseteq\wh{G}^{\ad}$, hence to a character $\zeta_{[\xi]}$ (see Section~\re{chev}(b)) of $\qlbar[c_{\wh{G}^{\ad}}]$. Since $\xi|_{\qlbar[c_{\wh{G}^{\ad}}]}=\zeta_{[\xi]}$ for every $\xi\in A_{\gm}^{\vee}$, the assertion follows. 
\end{proof}

\begin{Emp}
{\bf Notation.} We have a restriction map $\qlbar[c_{\wh{G}^{\ad}}]\to \qlbar[\ov{S}_{\varphi}]^{\ov{S}_{\varphi}}$, corresponding to an inclusion $\ov{S}_{\varphi}\subseteq\wh{G}^{\ad}$.  In particular, the action of $\qlbar[\ov{S}_{\varphi}]^{\ov{S}_{\varphi}}$ on $\Theta_{\varphi}$ induces an action of $\qlbar[c_{\wh{G}^{\ad}}]$ on $\Theta_{\varphi}$.

\end{Emp}

As it will be explained in Section~\ref{HKW}, the following proposition is essentially a reformulation of results of \cite{HKW}.

\begin{Prop} \label{P:HKW}
The restriction map $\res_{\gm}:\Theta_{\varphi}\to \on{Fun}([\on{Orb}^{\st}_{\gm}])$ is $\qlbar[c_{\wh{G}^{\ad}}]$-equivariant.
\end{Prop}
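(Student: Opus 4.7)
The strategy is to reduce the claimed equivariance to a matching of two eigenspace decompositions and to obtain that matching from the character formula of Hansen--Kaletha--Weinstein.

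First, I would spell out the action of $\qlbar[c_{\wh{G}^{\ad}}]$ on each side in scalar form. On the target, the decomposition $\on{Fun}([O_\gm]) = \bigoplus_{[s]} \on{Fun}([O_\gm])_{[s]}$ is preserved by $\qlbar[c_{\wh{G}^{\ad}}]$, and by \rl{isot} every $h \in \qlbar[c_{\wh{G}^{\ad}}]$ acts on $\on{Fun}([O_\gm])_{[s]}$ as the scalar $\zeta_{[s]}(h)$. On the source, the idempotent decomposition $\sum_{[\ka]} 1_{[\ka]} = 1$ in $\qlbar[\ov{S}_\varphi]^{\ov{S}_\varphi}$ yields $\Theta_\varphi = \bigoplus_{[\ka]} 1_{[\ka]}(\Theta_\varphi)$, and unwinding the definitions one sees that any $h \in \qlbar[c_{\wh{G}^{\ad}}]$ acts on $1_{[\ka]}(\Theta_\varphi)$ as the scalar $\zeta_{[\ka]}(h)$, where $[\ka]$ is viewed as a semisimple conjugacy class in $\wh{G}^{\ad}$ via $\ov{S}_\varphi\subseteq\wh{G}^{\ad}$.

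Consequently, the equivariance of $\res_\gm$ is equivalent to the inclusion
\[
\res_\gm\bigl(1_{[\ka]}(\Theta_\varphi)\bigr) \subseteq \on{Fun}([O_\gm])_{[\ka]}
\]
for every conjugacy class $[\ka]\subseteq\ov{S}_\varphi$. By \re{notstconj}(c) and Kottwitz's duality $A_\gm^\vee \simeq \pi_0((\wh{G}_\gm)^\Gm/Z(\wh{G})^\Gm)$, this amounts to controlling which $A_\gm$-characters can appear in $\res_\gm(1_{[\ka]}\chi_\pi)$ for $\pi\in\Pi^{FS}_\varphi$. This statement is exactly the content one expects from the HKW character formula: in \cite{HKW} the value $\chi_\pi(\gm')$ for $\gm'\in O_\gm$ is expressed as a sum indexed by conjugacy classes in $\ov{S}_\varphi$, whose $[\ka]$-summand depends on $\gm'$ only through the Kottwitz-type pairing of $[\ka]$ against $\inv(\gm_0,\gm')\in A_\gm$ for a fixed base point $\gm_0$. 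The plan is to identify this $[\ka]$-summand with $\res_\gm\bigl(1_{[\ka]}(\chi_\pi)\bigr)$; once that identification is made, the two decompositions match term-by-term and the equivariance follows.

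The main obstacle is precisely this identification of the spectral projector $1_{[\ka]}$ acting on $\Theta_\varphi$ with the $[\ka]$-piece appearing in the HKW decomposition of $\chi_\pi|_{O_\gm}$. Verifying it requires tracing the Fargues--Scholze spectral action through the geometric input of \cite{HKW} (nearby cycles on $\Bun_G$ and the associated Kottwitz pairings), and checking compatibility of normalisations and duality conventions between the spectral side and the Kottwitz isomorphism on the geometric side. This bookkeeping is where the real content of Section~\ref{HKW} will lie; modulo it, the proposition is essentially a reformulation of the HKW character formula.
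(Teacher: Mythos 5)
Your reduction to the eigenspace inclusion $\res_\gm\bigl(1_{[\ka]}(\Theta_\varphi)\bigr)\subseteq\on{Fun}([O_\gm])_{[\ka]}$ is a correct reformulation of the statement (it is exactly the form in which the proposition is used in Step~1 of the proof of the main theorem). But your account of what \cite{HKW} supplies is not accurate, and that is where the gap lies. \cite{HKW} does \emph{not} express $\chi_\pi|_{O_\gm}$ as a sum indexed by conjugacy classes in $\ov{S}_\varphi$ with $[\ka]$-summands controlled by a Kottwitz pairing; no such decomposition appears there, and extracting one would in effect presuppose the equivariance you are trying to prove. What \cite[Thm~6.5.2]{HKW} actually proves is a character identity for the Mantovan functor --- after the spectral--Hecke compatibility of Section~\re{connected}(e) and \rco{spectral}, equivalently for the Hecke operator --- namely $\chi_{\on{Mant}_{1,\mu}(\pi)}(\gm)=\bigl(\C{T}^{G\to G}_{1,\mu}(\chi_\pi)\bigr)(\gm)$, where $\C{T}^{G\to G}_{1,\mu}$ is an explicit transfer operator summed over the weight lattice of $V_\mu$. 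There is no natural way to read off from this the action of the idempotent $1_{[\ka]}$ on $\res_\gm(\chi_\pi)$ directly.

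Accordingly, the paper's proof does not pass through the idempotents $1_{[\ka]}$ at all. Instead it uses the fact that the class functions $[V]:=\Tr(-,V)$, for $V$ running over irreducible representations of $\wh{G}^{\ad}$, span $\qlbar[c_{\wh{G}^{\ad}}]$, and verifies equivariance of $\res_\gm$ on these generators, i.e.\ the scalar identity $\chi_{V(\pi)}(\gm)=\bigl(\on{can}_\gm([V])(\chi_\pi|_{O_\gm})\bigr)(\gm)$. Its left-hand side equals $\chi_{\on{Mant}_{1,\mu}(\pi)}(\gm)$ by \cite[Prop.~6.4.5]{HKW} together with \rco{spectral}(a) and the formula of Section~\re{main}(a); its right-hand side unwinds to $\bigl(\C{T}^{G\to G}_{1,\mu}(\chi_\pi)\bigr)(\gm)$ using the weight multiplicities $r_V[\la]$ of $V|_{\wh{G}_\gm/Z(\wh{G})}$ and the duality $r_{V_\mu}[\la^{-1}]=r_{V_\mu^\vee}[\la]$. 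To rescue your plan you would have to expand $1_{[\ka]}$ as a linear combination of the restrictions $[V]|_{\ov{S}_\varphi}$ and apply the $[V]$-identity term by term, which recovers the paper's argument by a detour; as stated, the ``identification'' you defer to is not available in \cite{HKW} and constitutes the missing content.
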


\begin{Emp} \label{E:zemb}
{\bf $Z$-embeddings.}
(a) Let $\iota: G\to G_1$ be a $z$-embedding in the sense of \cite[Definition~5.1]{Ka}. In particular, $\iota$ is an embedding of
connected reductive groups over $F$ such that $Z(G_1)$ is connected, $\iota(Z(G))\subseteq Z(G_1)$ and $G_1=\iota(G)\cdot Z(G_1)$.

\smallskip

(b) A $z$-embedding $\iota$ induces a homomorphism $\wh{\iota}:{}^L G_1\to {}^L G_2$ of Langlands dual groups. Then, by \cite[Corollary~5.13]{Ka}, there exists a semisimple $L$-parameter $\varphi_1:W_F\to {}^L G_1$, lifting $\varphi$. Moreover,
homomorphism $\wh{\iota}$ restricts to a homomorphism $\wh{\iota}_{\varphi}:S_{\varphi_1}\to S_{\varphi}$ and induces an isomorphism $\wh{\iota}_{\varphi}:\ov{S}_{\varphi_1}\isom \ov{S}_{\varphi}$ (use \cite[Lemma~5.12]{Ka}). In particular, $L$-parameter $\varphi_1$ is elliptic, because $\varphi$ is such.

\smallskip

(c) By part~(b), a preimage $\wh{\iota}_{\varphi}^{-1}([\ka])$ of an $\ov{S}_{\varphi}$-conjugacy class $[\ka]\subseteq \ov{S}_{\varphi}$
is an $\ov{S}_{\varphi_1}$-conjugacy class $[\ka_1]\subseteq \ov{S}_{\varphi_1}$. Moreover, by a standard argument (see, for example, \cite[Lemma~4.1.14]{BV}), one shows that a function $f_1\in C^{\infty}_{\inv}(G_1(F)^{\on{ell}})$ is $\C{E}_{\varphi_1,[\ka_1]}$-stable if and only if its pullback  $f:=f_1|_{G(F)}\in C^{\infty}_{\inv}(G_1(F)^{\on{ell}})$ is $\C{E}_{\varphi,[\ka]}$-stable.

\smallskip

(d) The isomorphism $\wh{\iota}_{\varphi}$ from part~(b) induces an isomorphism of $\qlbar$-algebras $(\wh{\iota}_{\varphi})^*:\qlbar[\ov{S}_{\varphi}]^{\ov{S}_{\varphi}}\isom\qlbar[\ov{S}_{\varphi_1}]^{\ov{S}_{\varphi_1}}$.
Moreover, in the notation of part~(c), it satisfies $(\wh{\iota}_{\varphi})^*(1_{[\ka]})=1_{[\ka_1]}$.
\end{Emp}

As it will be explained in Section~\ref{zemb}, the following result is a formal consequence of the functoriality properties of \cite{FS} and results of \cite{Ka}.

\begin{Prop} \label{P:zemb}
Suppose that we are in the situation of Section~\re{zemb}.

\smallskip

(a) The restriction functor $\pi\mapsto \pi|_{G(F)}$ induces a bijection $\Pi^{FS}_{\varphi_1}\isom \Pi^{FS}_{\varphi}$. Hence, the restriction map $f\mapsto f|_{G(F)}$ induces an isomorphism $\iota^*:\Theta_{\varphi_1}\isom \Theta_{\varphi}$ of $\qlbar$-vector spaces.

\smallskip

(b) For every $f_1\in \Theta_{\varphi_1}$ and $h\in \qlbar[\ov{S}_{\varphi}]^{\ov{S}_{\varphi}}$, we have an equality
\[
h(\iota^*(f_1))=((\wh{\iota}_{\varphi})^*(h))(f_1).
\]

\end{Prop}

Now we are ready to prove our Main Theorem.

\begin{Emp}
\begin{proof}[Proof of \rt{main}] We divide our proof into three steps:

\smallskip

{\bf Step 1.} Assume first that $Z(G)$ is connected. In this case, we are going to deduce the $\C{E}_{\varphi,[\ka]}$-stability of $f_{[\ka]}^{\on{ell}}$  from \rp{HKW}.

By Section~\re{stable}(c), the function $f^{\on{ell}}_{[\ka]}\in  C^{\infty}_{\inv}(G(F)^{\on{ell}})$ is $\C{E}_{\varphi,[\ka]}$-stable if and only if
for every $\gm\in G(F)^{\on{ell}}$, its restriction $\res_{\gm}(f^{\on{ell}}_{[\ka]})=\res_{\gm}(f_{[\ka]})\in \on{Fun}([\on{Orb}^{\st}_{\gm}])$
belongs to $\on{Fun}([\on{Orb}^{\st}_{\gm}])_{[\ka]}$. Thus, by \rl{isot}, it suffices to show that $\res_{\gm}(f_{[\ka]})$ is a  $\qlbar[c_{\wh{G}^{\ad}}]$-eigenfunction with eigencharacter $\zeta_{[\ka]}$.

Then, by \rp{HKW}, it suffices to show that $f_{[\ka]}=1_{[\ka]}(f)$ is a $\qlbar[c_{\wh{G}^{\ad}}]$-eigenfunction with eigencharacter $\zeta_{[\ka]}$. Since for every $h\in \qlbar[c_{\wh{G}^{\ad}}]$ we have equalities
\[
h(1_{[\ka]}(f))=((h|_{\ov{S}_{\varphi}})\cdot 1_{[\ka]})(f)\text{ and }(h|_{\ov{S}_{\varphi}})\cdot 1_{[\ka]}=\zeta_{[\ka]}(h)\cdot 1_{[\ka]},
\]
the assertion follows.

\smallskip

{\bf Step 2.} For a general $G$, we deduce the $\C{E}_{\varphi,[\ka]}$-stability of $f_{[\ka]}^{\on{ell}}$ from the case of
groups with connected center using $z$-embeddings.

We choose a $z$-embedding $\iota: G\to G_1$ (use \cite[Corollary~5.3]{Ka}), and let $\varphi_1$ and $[\ka_1]\subseteq \ov{S}_{\varphi_1}$ be as in Sections~\re{zemb}(b),(c). By \rp{zemb}(a), there exists a unique element $f_1\in \Theta_{\varphi_1}$ such that $f_1|_{G(F)}=f$. Moreover, by \rp{zemb}(b), the identity $(\wh{\iota}_{\varphi})^*(1_{[\ka]})=1_{[\ka_1]}$ from Section~\re{zemb}(d) implies that $(f_1)_{[\ka_1]}|_{G(F)}=f_{[\ka]}$.

Since the center $Z(G_1)$ is connected, we conclude by Step 1 that each $(f_1)^{\on{ell}}_{[\ka_1]}$ is $\C{E}_{\varphi_1,[\ka_1]}$-stable. Hence, the function $f^{\on{ell}}_{[\ka]}=(f_1)^{\on{ell}}_{[\ka_1]}|_{G(F)}$ is $\C{E}_{\varphi,[\ka]}$-stable by Section~\re{zemb}(c).

\smallskip

{\bf Step 3.} Using the fact \cite[Theorem~IX.0.5(vii)]{FS} asserting that the correspondence $\pi\mapsto \varphi_{\pi}$ is compatible with parabolic induction, we see that all representations $\pi\in\Pi^{\FS}_{\varphi}$ are supercuspidal.
Since the endoscopic datum  $\C{E}_{\varphi,[\ka]}$ is elliptic (see Section~\re{enddatum}(d)) and the restriction $f^{\on{ell}}_{[\ka]}=(f_{[\ka]})|_{G(F)^{\on{ell}}}$ is $\C{E}_{\varphi,[\ka]}$-stable by Step 2, the $\C{E}_{\varphi,[\ka]}$-stability of $f_{[\ka]}$ now follows from \rp{ell}.
\end{proof}
\end{Emp}

\begin{Emp}
{\bf Remark.} The structure of the proof of the $\C{E}_{\varphi,[\ka]}$-stability of $f^{\on{ell}}_{[\ka]}$ is analogous to the one of \cite[Theorem~4.4.9(b)]{BV}, where another instance of stability is shown by a geometric method. Namely, in both cases the assertion is reduced to the case of groups with connected center, while in this case the assertion is deduced from an assertion that two actions are {\em compatible}.

\smallskip

More precisely, we deduce $\C{E}_{\varphi,[\ka]}$-stability of $f^{\on{ell}}_{[\ka]}$ from \rp{HKW}, while the result of \cite{BV} is deduced from \cite[Theorem~2.3.4]{BV}, and both \rp{HKW} and \cite[Theorem~2.3.4]{BV} can be stated as an assertion that natural actions of $\qlbar[c_{\wh{G}}]$ are $\qlbar[\pi_0(LG_{\gm})]$ are compatible via homomorphism $\on{can}_{\gm}$.
\end{Emp}

\section{Spectral action} In this sections all categories will be $\infty$-categories and all functors will be functors of $\infty$-categories.

\begin{Emp} \label{E:FS}
{\bf Spectral action of Fargues--Scholze (\cite{FS}).}

\smallskip

(a) Let $\Bun_G$ be the stack of $G$-bundles over the Fargues--Fontaine
curve. Let $D(\Bun_G):=D_{\lis}(\Bun_G,\qlbar)$ be the ($\infty$-)derived category of $\ell$-adic sheaves on $\Bun_G$, and let $D(\Bun_G)^{\om}$
be its full subcategory of compact objects.

\smallskip

(b) Let $\Par_{G}:=[Z^1(W_F ,\wh{G})/\wh{G}]$ be the stack of Langlands parameters (see \cite[Theorem~VII.1.3]{FS}), and let
$\Perf(\Par_{G})$ be the symmetric monoidal category of perfect complexes on $\Par_G$.
Fargues and Scholze defined an action of $\Perf(\Par_G)$
on $D(\Bun_G)^{\om}$ (see \cite[Corollary~X.1.3]{FS}), usually called the {\em spectral action}.

\smallskip

(c) Recall (see \cite[Proposition~VII.7.4]{FS}) that the category  $D(\Bun_G)$ is compactly generated. Therefore passing to ind-completions, the spectral action of part~(b) induces the action of $\QCoh(\Par_G)=\Ind\Perf(\Par_G)$
on $D(\Bun_G)$.

\smallskip

(d) The spectral action of part~(c) induces a homomorphism
\[
\Psi:\qlbar[\Par_G]=\End(1_{\QCoh(\Par_G)})\to \C{Z}(D(\Bun_G)):=\pi_0(\End(\Id_{D(\Bun_G)}))
\]
of $\qlbar$-algebras from the algebra of regular functions on
$\Par_G$ to the Bernstein center of $D(\Bun_G)$.
\end{Emp}

\begin{Emp} \label{E:connected}
{\bf The connected component of $\varphi$.}

\smallskip

(a) Since $L$-parameter $\varphi$ is elliptic, the connected component $C_{\varphi}\subseteq \Par_G$  of $\varphi$ consists of unramified twists of $\varphi$ (see \cite[Section~X.2]{FS} and \cite[Footnote~6 on page~12]{HJ}), that is, consists of $L$-parameters $\varphi\cdot\mu$, where $\mu:W_F\to \wh{Z}:=(Z(\wh{G})^{\Gm})^0$ is an unramified character. In particular, every $L$-parameter $\varphi'\in C_{\varphi}$ is elliptic.

\smallskip

(b) Note that $C_{\varphi}$ gives rise a direct summand $D^{C_{\varphi}}(\Bun_G)\subseteq D(\Bun_G)$. Explicitly,
let  $1_{C_{\varphi}}\in \qlbar[\Par_G]$ be the characteristic function of $C_{\varphi}$. Then
$D^{C_{\varphi}}(\Bun_G)$ consists of objects $A\in D(\Bun_G)$ such that $\Psi(1_{C_{\varphi}})\in \C{Z}(D(\Bun_G))$ acts on $A$ as identity.

\smallskip

(c) Using the fact that $\QCoh(\Par_G)$ is a {\em symmetric} monoidal category,
the spectral action of Section~\re{FS} restricts to an action of  $\QCoh(\Par_G)$ on $D^{C_{\varphi}}(\Bun_G)$ and induces an action of
$\QCoh(C_{\varphi})$ on $D^{C_{\varphi}}(\Bun_G)$. Furthermore, the latter action restricts to an action of
 $\Perf(C_{\varphi})$ on $D^{C_{\varphi}}(\Bun_G)^{\om}$.
%
%
%
\end{Emp}

\begin{Emp} \label{E:Hecke}
{\bf Hecke action.}

\smallskip

(a) Consider projection $\pr:\Par_G=[Z^1(W_F ,\wh{G})/\wh{G}]\to [\pt/\wh{G}]\to [\pt/\wh{G}^{\ad}]$, where $\pt:=\Spec\qlbar$. Then the pullback
$\QCoh([\pt/\wh{G}^{\ad}])\to\QCoh(\Par_G)$ is symmetric monoidal and maps $\Perf([\pt/\wh{G}^{\ad}])\to\Perf(\Par_G)$.
So the spectral action of Section~\re{FS}(c) induces an action  $\QCoh([\pt/\wh{G}^{\ad}])$ on $D(\Bun_G)$, called the {\em Hecke action}. Furthermore, the Hecke action  preserves the full
subcategory $D^{C_{\varphi}}(\Bun_G)$ and restricts to an action of
$\Perf([\pt/\wh{G}^{\ad}])$ on $D(\Bun_G)^{\om}$ and $D^{C_{\varphi}}(\Bun_G)^{\om}$.

\smallskip

(b) Let $\Rep_{\on{fd}}(\wh{G}^{\ad})$ be the category of finite-dimensional representations of $\wh{G}^{\ad}$. Then the action of part~(a) defines the action of the symmetric monoidal category $D^b(\Rep_{\on{fd}}(\wh{G}^{\ad}))=\Perf([\pt/\wh{G}^{\ad}])$, also called the {\em Hecke action}.
\end{Emp}

\begin{Emp} \label{E:strata}
{\bf Stratification.}

\smallskip

(a) Recall that $\Bun_G$ has a natural stratification $\{\Bun_G^b\}_b$, indexed by isocrystals $b\in B(G)$, and that each embedding $i_b:\Bun_G^b\hra \Bun_G$ is locally closed. Moreover, $i_b$ is an open embedding, if $b\in B(G)$ is basic.

\smallskip

(b) By part~(a), the union $\Bun^{\on{ss}}_G:=\bigcup_{b\in B(G)_{\on{basic}}}\Bun_G^b\subseteq\Bun_G$ is open and decomposes as a disjoint union $\Bun^{\on{ss}}_G:=\bigsqcup_{b\in B(G)_{\on{basic}}}\Bun_G^b$.

\smallskip

(c) Every $b\in B(G)$ gives rise to a reductive group $G_b$ over $F$, and $G_b=G$ if $b=1$.
Moreover, the category $D(\Bun_G^b):=D_{\on{lis}}(\Bun_G^b,\qlbar)$ is canonically identified with the derived category $D(G_b(F))$ of smooth representations of $G_b(F)$ over $\qlbar$ (see \cite[Propostion~VII.7.1]{FS}).

\smallskip

(d) Each pullback $i_b^*: D(\Bun_G)\to D(\Bun_G^b)\simeq D(G_b(F))$ has a fully faithful left adjoint
$i_{b\natural}:D(G_b(F))\hra D(\Bun_G)$ (see \cite[Propostion~VII.7.2]{FS}) and a fully faithful right adjoint
$i_{b*}:D(G^b(F))\hra D(\Bun_G)$. By adjunction, we have a canonical morphism of functors $i_{b\natural}\to i_{b*}$.

\smallskip

(e) The fully faithful functor $i_{b\natural}$ induces a restriction map between Bernstein centers $\C{Z}(D(\Bun_G))\to \C{Z}(D(G_b(F)))$. In particular, the algebra homomorphism $\Psi:\qlbar[\Par_G]\to \C{Z}(D(\Bun_G))$ from Section~\re{FS}(d) induces an algebra homomorphism
$\Psi^b:\qlbar[\Par_G]\to \C{Z}(D(G_b(F)))$ for all $b\in B(G)$.

\smallskip

(f) As in Section~\re{connected}(b), algebra homomorphism from part~(e) gives rise to a direct summand $D^{C_{\varphi}}(G_b(F))\subseteq D(G_b(F))$ for every $b\in B(G)$. 

\smallskip

(g) By construction, the functor $i_{b\natural}$ maps $D^{C_{\varphi}}(G_b(F))$ to  $D^{C_{\varphi}}(\Bun_G)$. Moreover,
using counit map $i_{b\natural}i_b^*\to\Id$ and fully faithfulness of $i_{b\natural}$, we conclude that $i_{b}^*$ maps $D^{C_{\varphi}}(\Bun_G)$ to  $D^{C_{\varphi}}(G_b(F))$. In particular, the induced functors $i_{b\natural}:D^{C_{\varphi}}(G_b(F))\to D^{C_{\varphi}}(\Bun_G)$ and
$i_{b}^*:D^{C_{\varphi}}(\Bun_G)\to D^{C_{\varphi}}(G_b(F))$ form an adjoint pair. Furthermore, using morphism $i_{b\natural}\to i_{b*}$
and fully-faithfulness of $i_{b*}$ we conclude that $i_{b*}$ maps $D^{C_{\varphi}}(G_b(F))$ to $D^{C_{\varphi}}(\Bun_G)$.

\smallskip

(h) Since every $\varphi'\in C_{\varphi}$ is elliptic (see Section~\re{connected}(a)), we deduce from \cite[Theorem~IX.7.2]{FS}
that $D^{C_{\varphi}}(G_b(F))=0$ if $b\in B(G)$ is not basic.

\end{Emp}








\begin{Lem} \label{L:ell}
Let $b\in B(G)$ be basic.

\smallskip

(a) For every $A\in D^{C_{\varphi}}(G_b(F))$, the canonical morphism $i_{b\natural}(A)\to i_{b*}(A)$ (see Section~\re{strata}(d)) is an isomorphism.

\smallskip

(b) The Hecke action on $\QCoh([\pt/\wh{G}^{\ad}])$ on $D^{C_{\varphi}}(\Bun_G)$ preserves the full subcategory
$i_{b\natural}(D^{C_{\varphi}}(G_b(F)))\subseteq D^{C_{\varphi}}(\Bun_G)$.
\end{Lem}
\begin{proof}
(a) We have to show that $i^*_{b'}(i_{b*}(A))\simeq 0$ for every $b'\neq b\in B(G)$. Since $A\in D^{C_{\varphi}}(G_b(F))$, we conclude from Section~\re{strata}(g) that
$i^*_{b'}(i_{b*}(A))\in D^{C_{\varphi}}(G_{b'}(F))$. Therefore it follows from Section~\re{strata}(h) that $i^*_{b'}(i_{b*}(A))\simeq 0$, if $b'$ is not basic. Finally, the assertion  $i^*_{b'}(i_{b*}(A))\simeq 0$ for basic $b'\neq b$ follows from the fact that the embedding  $i^{\on{ss}}_b:\Bun_G^b\hra\Bun_G^{\on{ss}}$ is open and closed (see Section~\re{strata}(b)).

\smallskip

(b) Note that there is a natural bijection $\kappa:\pi_0(\Bun_G)\isom X^*(Z(\wh{G}^{\Gm}))$ (see \cite[Corollary~IV.1.23]{FS}). Moreover, by  \cite[Lemma~5.3.2]{Zou}
for each connected component $\Bun_G^{\al}\subseteq \Bun_G$  the action of $\QCoh([\pt/\wh{G}^{\ad}])$ on $D^{C_{\varphi}}(\Bun_G)$ preserves full subcategory $D(\Bun^{\al}_G)\subseteq  D(\Bun_G)$. Hence it preserves a full subcategory
\[
D^{C_{\varphi}}(\Bun^{\al}_G):=D^{C_{\varphi}}(\Bun_G)\cap D(\Bun^{\al}_G)\subseteq  D^{C_{\varphi}}(\Bun_G).
\]

Let  $\Bun_G^{\al}$ be a connected component of $\Bun_G$ such that $\Bun^b_G\subseteq\Bun_G^{\al}$.
It suffices to show that the functor $i_{b\natural}: D(G_b(F))\to D(\Bun^{\al}_G)$ induces an equivalence of categories
$D^{C_{\varphi}}(G_b(F))\isom D^{C_{\varphi}}(\Bun^{\al}_G)$.

We have to show that for every $A\in D^{C_{\varphi}}(\Bun^{\al}_G)$ and $b'\neq b\in B(G)$ such that $\Bun^{b'}_G\subseteq\Bun_G^{\al}$ we have $i_{b'}^*(A)\simeq 0$. But such $b'$ is automatically non-basic, so the assertion follows from the fact
that $i_{b'}^*(A)\in D^{C_{\varphi}}(\Bun^{b'}_G)$ (by Section~\re{strata}(g)) and hence $i_{b'}^*(A)\simeq 0$ (by Section~\re{strata}(h)).
\end{proof}




\begin{Emp} \label{E:spectral}
{\bf The spectral action of $\QCoh([\pt/S_{\varphi}])$.}

\smallskip

(a) Let $[C_{\varphi}]$ be the coarse moduli space of $C_{\varphi}$. Explicitly, let $\wt{C}_{\varphi}\subseteq Z^1(W_F ,\wh{G})$ be the preimage of $C_{\varphi}\subseteq\Par_G$.
Then $\wt{C}_{\varphi}$ is an affine scheme (see \cite[Theorem~VIII.1.3]{FS}), $C_{\varphi}$ is the quotient stack $[\wt{C}_{\varphi}/\wh{G}]$ and $[C_{\varphi}]$ is an affine scheme, obtained as a GIT quotient
$\wt{C}_{\varphi}//\wh{G}$.

\smallskip

(b) Consider the (homotopy) fiber product $C^{\varphi}:=C_{\varphi}\times_{[C^{\varphi}]}\pt$ of the canonical projection $\pr:C_{\varphi}\to [C^{\varphi}]$ and a morphism $\pt\to [C^{\varphi}]$ corresponding to an element $[\varphi]:=\pr(\varphi)\in [C^{\varphi}](\qlbar)$.

\smallskip

(c) Since $L$-parameter $\varphi$ is assumed to be elliptic, we conclude that $C^{\varphi}\simeq [\pt/S_{\varphi}]$ (see \cite[Lemmas 3.6 and 3.10]{HJ}).

\smallskip

(d) Morphisms $\pr$ and $[\varphi]$ from part~(b) induces symmeric monoidal functors $\pr^*: \QCoh([C_{\varphi}])\to \QCoh(C_{\varphi})$ and
$[\varphi]^*: \QCoh([C_{\varphi}])\to \QCoh(\pt)$. In particular, the action of $\QCoh(C_{\varphi})$ on $D^{C_{\varphi}}(\Bun_G)$ from Section~\re{connected}(c) restricts to the action of $\QCoh([C_{\varphi}])$. Taking Lurie tensor product $-\otimes_{\QCoh([C_{\varphi}])}\QCoh(\pt)$, we get an action of the symmetric monoidal category
$\QCoh(C_{\varphi})\otimes_{\QCoh([C_{\varphi}])}\QCoh(\pt)$ on
\[
D^{\varphi}(\Bun_G):=D^{C_{\varphi}}(\Bun_G)\otimes_{\QCoh([C_{\varphi}])}\QCoh(\pt).
\]

\smallskip

(e) Using the fact that the morphism $\pt\to [C_{\varphi}]$ is affine, we conclude from the Barr-Beck-Lurie theorem \cite[Theorem~4.7.3.5]{Lu} that the natural morphism
\[
\QCoh(C_{\varphi})\otimes_{\QCoh([C_{\varphi}])}\QCoh(\pt)\to \QCoh(C^{\varphi})
\]
is an equivalence.

\smallskip

(f) Combining identifications of parts~(c) and (e), the action of part~(d) induces an action of $\QCoh([\pt/S_{\varphi}])$ on $D^{\varphi}(\Bun_G)$.
\end{Emp}

\begin{Emp} \label{E:compat}
{\bf Compatibilities.}

\smallskip

(a) The functor $[\varphi]^*: \QCoh([C_{\varphi}])\to \QCoh(\pt)$ is (symmetric) monoidal, so it induces an action $\QCoh([C_\varphi])$ on $\QCoh(\pt)$. Moreover, by the projection formula, the functor  $[\varphi]_*: \QCoh(\pt)\to \QCoh([C_\varphi])$, right adjoint to $[\varphi]^*$, is
 $\QCoh([C_{\varphi}])$-linear. Therefore it induces a functor $[\varphi]_*:D^{\varphi}(\Bun_G)\to D^{C_\varphi}(\Bun_G)$, right adjoint to the pullback $[\varphi]^*:D^{C_\varphi}(\Bun_G)\to D^{\varphi}(\Bun_G)$.

\smallskip

(b) The inclusion ${S}_{\varphi}\hra\wh{G}$ induces a monoidal functor 
\[
\QCoh([\pt/\wh{G}])\to \QCoh([\pt/S_{\varphi}]).
\] 
Therefore the spectral action  of $\QCoh([\pt/S_{\varphi}])$ on $D^{\varphi}(\Bun_G)$
induces an action of  $\QCoh([\pt/\wh{G}])$ on  $D^{\varphi}(\Bun_G)$. Moreover, using commutative diagram of projections
\[
\begin{CD}
C^{\varphi} @>>> C_{\varphi}\\
@VVV @VVV\\
[\pt/{S}_{\varphi}] @>>> [\pt/\wh{G}], 
\end{CD}
\]
we conclude that the pullback $[\varphi]^*:D^{C_\varphi}(\Bun_G)\to D^{\varphi}(\Bun_G)$ is $\QCoh([\pt/\wh{G}])$-linear.

\smallskip

(c) Moreover, since the monoidal $\infty$-category $\QCoh([\pt/\wh{G}]$ is rigid, we conclude from parts~(a) and (b) that the push-forward  $[\varphi]_*:D^{\varphi}(\Bun_G)\to D^{C_\varphi}(\Bun_G)$
is  $\QCoh([\pt/\wh{G}])$-linear as well (see \cite[I, Lemma~9.3.6]{GR}).
\end{Emp}

\begin{Emp} \label{E:obssp}
{\bf Observations.}

\smallskip

(a) Since $[C_{\varphi}]$ is affine and the functor $i_{1\natural}: D^{C_{\varphi}}(G(F))\to  D^{C_{\varphi}}(\Bun_G)$
is fully faithful, we conclude as in \cite[Remark before Proposition~2.7]{HJ} that there exists a unique action of $\QCoh([C_{\varphi}])$ on $D^{C_{\varphi}}(G(F))$ such that functor $i_{1\natural}$ is $\QCoh([C_{\varphi}])$-linear. Moreover, since monoidal category $\QCoh([C_{\varphi}])$ is rigid, the right adjoint $i_{1}^*$ of $i_{1\natural}$
is $\QCoh([C_{\varphi}])$-linear as well (see \cite[I, Lemma~9.3.6]{GR}).

\smallskip

(b) Using part~(a) and applying functor $-\otimes_{\QCoh([C_{\varphi}])}\QCoh(\pt)$, we can form a category $D^{\varphi}(G(F)):=D^{C_{\varphi}}(G(F))\otimes_{\QCoh([C_{\varphi}])}\QCoh(\pt)$ and a pair of adjoint functors  $i_{1\natural}: D^{\varphi}(G(F))\to  D^{\varphi}(\Bun_G)$ and $i_{1}^*: D^{\varphi}(\Bun_G)\to  D^{\varphi}(G(F))$, and functor $i_{1\natural}$ is fully faithful. In particular, functor  $i_{1\natural}$ induces a functor $D^{\varphi}(G(F))^{\om}\to  D^{\varphi}(\Bun_G)^{\om}$ between subcategories of compact objects.

\smallskip


\smallskip

(c) The spectral action of $\QCoh([\pt/S_{\varphi}])$ on $D^{\varphi}(\Bun_G)$ from Section~\re{spectral} restricts to an action of its monoidal subcategory $\Perf([\pt/S_{\varphi}])$. Furthermore, we have a symmetric monoidal functor  $\QCoh([\pt/\ov{S}_{\varphi}])\to\QCoh([\pt/S_{\varphi}])$, corresponding to the projection $S_{\varphi}\to \ov{S}_{\varphi}$, thus an action of $\QCoh([\pt/\ov{S}_{\varphi}])$ on $D^{\varphi}(\Bun_G)$.

\smallskip

\end{Emp}

\begin{Lem} \label{L:compact}
(a) The spectral action of $\Perf([\pt/S_{\varphi}])$  on  $D^{\varphi}(\Bun_G)$ from Section~\re{spectral} preserves the subcategory $D^{\varphi}(\Bun_G)^{\omega}\subseteq D^{\varphi}(\Bun_G)$ of compact objects.

\smallskip

(b) Moreover, the induced action of $\Perf([\pt/\ov{S}_{\varphi}])$ on $D^{\varphi}(\Bun_G)^{\omega}$ preserves the full subcategory
$i_{1\natural}(D^{\varphi}(G(F))^{\omega})$.

\end{Lem}

\begin{proof}
(a) Note that the pullback $[\varphi]^*: \QCoh([C_{\varphi}])\to \QCoh(\pt)$ has a conservative right adjoint $[\varphi]_*$.
Therefore  $\QCoh(\pt)$ is generated by the image of $[\varphi]^*$ under colimits. Hence for every $\QCoh([C_{\varphi}])$-module $\C{M}$ the tensor product
\[
\C{M}\otimes_{\QCoh([C_{\varphi}])}\QCoh(\pt)
\]
is generated by the image of the functor
$\C{M}\to\C{M}\otimes_{\QCoh([C_{\varphi}])}\QCoh(\pt)$ under colimits.

\smallskip 

Applying the above observation to the $\QCoh([C_{\varphi}])$-module $\QCoh(C_{\varphi})$, we conclude that $\QCoh([\pt/S_{\varphi}])\simeq \QCoh(C^{\varphi})$ is generated by the image of the pullback $\QCoh(C_{\varphi})\to \QCoh(C^{\varphi})$ by colimits. Hence $\Perf(C^{\varphi})$ is generated by the image of
pullback $\Perf(C_{\varphi})\to \Perf(C^{\varphi})$ under finite colimits and retracts. Thus it suffices to show that  $D^{\varphi}(\Bun_G)^{\omega}\subseteq D^{\varphi}(\Bun_G)$ is stable under the action
of the image of  $\Perf(C_{\varphi})\to \Perf(C^{\varphi})$.

\smallskip 

Next, arguing as above, $D^{\varphi}(\Bun_G)^{\omega}$ is generated by the essential image of  $D^{C_{\varphi}}(\Bun_G)^{\omega}\to D^{\varphi}(\Bun_G)^{\omega}$ under finite colimits and retracts. So it remains to show that the image of  $D^{C_{\varphi}}(\Bun_G)^{\omega}\to D^{\varphi}(\Bun_G)^{\omega}$ is stable under the image of  $\Perf(C_{\varphi})\to \Perf(C^{\varphi})$. Since   $D^{C_{\varphi}}(\Bun_G)^{\omega}\subseteq D^{C_{\varphi}}(\Bun_G)$ is stable under the action of the image of  $\Perf(C_{\varphi})$ (see Section~\re{connected}(c)), the assertion follows.

\smallskip

(b) Using Section~\re{compat}(b) and arguing as in part~(a), it suffices to show that the action of $\Perf([\pt/\wh{G}^{\ad}])$ on $D^{C_{\varphi}}(\Bun_G)^{\omega}$ preserves a full subcategory $i_{1\natural}(D^{C_{\varphi}}(G(F))^{\omega})$.
Thus, by \rl{ell}(b), it suffices to show the equality
\[
i_{1\natural}(D^{C_{\varphi}}(G(F))^{\omega})=i_{1\natural}(D^{C_{\varphi}}(G(F)))\cap  D^{C_{\varphi}}(\Bun_G)^{\omega}.
\]
The inclusion $\subseteq$ was mentioned before, while inclusion $\supseteq$ follows from the fact that functor 
$i_{1\natural}:D^{C_{\varphi}}(G(F))\to D^{C_{\varphi}}(\Bun_G)$ is fully faithful and its right adjoint $i_1^*:D^{C_{\varphi}}(\Bun_G)\to D^{C_{\varphi}}(G(F))$ preserve compact objects (compare Section~\re{obssp}(b)).
\end{proof}

%
%

\begin{Emp} \label{E:main}
{\bf Main construction.}

\smallskip

(a) By \rl{compact}, the action of $\Perf([\pt/\ov{S}_{\varphi}])$ on  $D^{\varphi}(\Bun_G)$
induces an action of $\Perf([\pt/\ov{S}_{\varphi}])$ on   $D^{\varphi}(G(F))^{\om}$ .


\smallskip

(b)  The action of part~(a) induces an action of the $\qlbar$-algebra $K_0(\Perf([\pt/\ov{S}_{\varphi}]))$ on $K_0(D^{\varphi}(G(F))^{\om})$, where for every triangulated or stable $\infty$-category $\C{C}$, we denote by $K_0(\C{C})$ its Grothendieck group, tensored over $\qlbar$.

\smallskip

(c) The  assignment $V\mapsto\Tr(-,V)$ identifies the $\qlbar$-algebra 
\[
K_0(D^b(\Rep_{\on{fd}}(\ov{S}_{\varphi})))\simeq K_0(\Perf([\pt/\ov{S}_{\varphi}]))
\] 
with $\qlbar[\ov{S}_{\varphi}]^{\ov{S}_{\varphi}}$. On the other hand, by \rl{kgroups} below, we have a natural isomorphism of $\qlbar$-vector spaces $K_0(D^{\varphi}(G(F))^{\om})\simeq \Theta_{\varphi}$.

\smallskip

(d) Using identifications of part~(c), the action of part~(b) induces an action of the $\qlbar$-algebra $\qlbar[\ov{S}_{\varphi}]^{\ov{S}_{\varphi}}$ on $\Theta_{\varphi}$.
\end{Emp}

\begin{Lem} \label{L:kgroups}
We have a natural isomorphism $K_0(D^{\varphi}(G(F))^{\om})\simeq \Theta_{\varphi}$ of vector spaces.
\end{Lem}

\begin{proof}
Arguing as in \cite[Corollary~2.9]{HJ}, every $\pi\in\Pi^{FS}_{\varphi}\in D^{C_\varphi}(G(F))$ has a natural lift to an object $\wt{\pi}\in D^{\varphi}(G(F))$.
Moreover, by \cite[Lemma~3.10]{HJ}, every object of $D^{\varphi}(G(F))^{\om}$ is isomorphic to a direct sum $\bigoplus_{\al=1}^n\wt{\pi}_\al[n_\al]$ for some $\pi_\al\in\Pi^{FS}_{\varphi}$ and $n_{\al}\in\B{Z}$. Therefore $K_0(D^{\varphi}(G(F))^{\om})$ is naturally identified with a $\qlbar$-vector space $\qlbar[\Pi^{FS}_{\varphi}]$ with basis $\Pi^{FS}_{\varphi}$. Hence, using linear independence of characters, the map $\pi\mapsto \chi_{\pi}$ induces an isomorphism of $\qlbar$-vector spaces $K_0(D^{\varphi}(G(F))^{\om})\simeq \Theta_{\varphi}$.
\end{proof}

\begin{Emp}
{\bf Remark.} Note that the situation drastically simplifies when the center of $G(F)$ is compact. 
Indeed, in this case, the group invariants $Z(\wh{G})^{\Gm}$ is finite, the map 
$[\varphi]:\pt\to [C_{\varphi}]$ from Section~\re{spectral}(b) is an isomorphism 
(see Section~\re{connected}(a)), morphism $C^{\varphi}\to C_{\varphi}$ is an isomorphism, and functors 
$D^{C_{\varphi}}(\Bun_G)\to D^{\varphi}(\Bun_G)$ and  $D^{C_{\varphi}}(G(F))\to D^{\varphi}(G(F))$ are equivalences of categories, 
which makes Sections \re{spectral}-\re{obssp} and \rl{compact} vacuous.
\end{Emp}

\section{Proof of \rp{HKW}} \label{HKW}

\begin{Emp}
{\bf Reformulation of the problem.}

(a) For every irreducible representation $V\in\on{Irr}(\wh{G}^{\ad})$, we set
\[
[V]:=\Tr(-, V)\in \qlbar[\wh{G}^{\ad}]^{\wh{G}^{\ad}}=\qlbar[c_{\wh{G}^{\ad}}].
\]
Since such $[V]$'s generate $\qlbar[c_{\wh{G}^{\ad}}]$ as a $\qlbar$-vector space, it suffices to show that for every $V\in\on{Irr}(\wh{G})$,
$\pi\in \Pi^{\FS}_{\varphi}$ and $\gm\in G(F)^{\on{ell}}$, we have an equality
\begin{equation} \label{Eq:HKW}
\left([V](\chi_{\pi})\right)(\gm)=\left(\on{can}_{\gm}([V])(\chi_{\pi}|_{\on{Orb}^{\st}_{\gm}})\right)(\gm).
\end{equation}

\smallskip

(b) Our identity \form{HKW} is a formal consequence of results of \cite{HKW}. Namely, we can assume that $V=V_{\mu}^{\vee}$, where
$V_{\mu}$ is an irreducible representation of $\wh{G}$ of highest weight $\mu$. Then, by \cite[Theorem~6.5.2]{HKW}, it suffices to show  equalities
\begin{equation} \label{Eq:HKW1}
\left([V](\chi_{\pi})\right)(\gm)=\chi_{\on{Mant}_{1,\mu}(\pi)}(\gm),
\end{equation}
and
\begin{equation} \label{Eq:HKW2}
\left(\on{can}_{\gm}([V])(\chi_{\pi}|_{\on{Orb}^{\st}_{\gm}})\right)(\gm)=\left(\C{T}^{G\to G}_{1,\mu}(\chi_{\pi})\right)(\gm),
\end{equation}
where $\on{Mant}_{1,\mu}(\pi)$ is defined in \cite[paragraph after Definition~2.4.3]{HKW},
while $\C{T}^{G\to G}_{1,\mu}$ is defined in \cite[Definition~3.2.7]{HKW}.
\end{Emp}

\begin{Emp}
{\bf Proof of identity \form{HKW1}.}
By \rl{ell}, there exists a unique action of $\Perf([\pt/\wh{G}^{\ad}])$ on $D(G(F))^{\om}$ such that
$i_{1*}: D^{C_\varphi}(G(F))^{\om}\to  D^{C_\varphi}(\Bun_G)^{\om}$ is $\Perf([\pt/\wh{G}^{\ad}])$-linear. Moreover, since Hecke operators preserve ULA objects (see \cite[Theorem~IX.2.2]{FS} and compare \cite[Definition~1.6.2]{Ha}), we conclude that $V(\pi)\in D(G(F))^{\om}$ has the property that $H^*(V(\pi)):=\bigoplus_n H^n(V(\pi))$ is a smooth
representation of $G(F)$ of finite length. Therefore we can form an invariant generalized function  $\chi_{V(\pi)}\in\Theta_{\varphi}$.
Furthermore, using observation of Section~\re{compat}(c) and unwinding the construction of Section~\re{main} we have an equality $[V](\chi_{\pi})=\chi_{V(\pi)}$.
Now identity \form{HKW1} follows from \cite[Proposition~6.4.5]{HKW}.

\end{Emp}

\begin{Emp}
{\bf Proof of identity \form{HKW2}.}
\smallskip

(a) Let $\La:=X^*(\wh{G}_{\gm}/Z(\wh{G}))$ be the group of characters. For every representation $U$ of $\wh{G}^{\ad}$ and every $\la\in\La$, we denote by $r_{U}[\la]$ the dimension of the $\la$-weight subspace $U|_{\wh{G}_{\gm}/Z(\wh{G})}$. Notice that the embedding
$\wh{G}_{\gm}/Z(\wh{G})\hra \wh{G}^{\ad}$ is defined uniquely up to a conjugacy, and therefore the dimension $r_{U}[\la]$ is well-defined.

\smallskip

(b) By definition, $\on{can}_{\gm}([V])\in \qlbar[(\wh{G}_{\gm})^{\Gm}/Z(\wh{G})^{\Gm}]$ is equals to the restriction of
$\Tr(-,V)|_{\wh{G}_{\gm}/Z(\wh{G})}=\sum_{\la\in\La}r_{V}[\la]\la\in \qlbar[\wh{G}_{\gm}/Z(\wh{G})]$ to $(\wh{G}_{\gm})^{\Gm}/Z(\wh{G})^{\Gm}$.

\smallskip

(c) Notice that the isomorphism $A_{\gm}^{\vee}\simeq (\wh{G}_{\gm})^{\Gm}/Z(\wh{G})^{\Gm}$ (see Section~\re{stconj}(c)) induces an isomorphism
$A_{\gm}\simeq X^*((\wh{G}_{\gm})^{\Gm}/Z(\wh{G})^{\Gm})$. In particular, the embedding $(\wh{G}_{\gm})^{\Gm}/Z(\wh{G})^{\Gm}\hra \wh{G}_{\gm}/Z(\wh{G})$ induces a (surjective) homomorphism $\La\to A_{\gm}$ on characters. Therefore the action of $A_{\gm}$ on
$\on{Fun}([\on{Orb}^{\st}_{\gm}])$ (see Section~\re{notstconj}(a)) induces an action of $\La$ on $\on{Fun}([\on{Orb}^{\st}_{\gm}])$.

\smallskip

(d) Unwinding the definitions, we have an equality
\[
\left[\C{T}^{G\to G}_{1,\mu}(\chi_{\pi})\right](\gm)=\left[\left(\sum_{\la\in\La}r_{V_{\mu}}[\la] \la^{-1}\right)(\chi_{\pi}|_{\on{Orb}^{\st}_{\gm}})\right](\gm)=\left[\left(\sum_{\la\in\La}r_{V_{\mu}}[\la^{-1}]\la\right)(\chi_{\pi}|_{\on{Orb}^{\st}_{\gm}})\right](\gm).
\]
Notice that sign $(-1)^d$ in the definition of $\C{T}^{G\to G}_{1,\mu}$ is always $1$ in our case, because
$\mu$ is a dominant weight of $\wh{G}^{\ad}$.

\smallskip

(e) Since $r_{V_{\mu}}[\la^{-1}]=r_{V_{\mu}^{\vee}}[\la]=r_{V}[\la]$, identity \form{HKW2} follows from  parts (b),(d).
\end{Emp}

\section{Proof of \rp{zemb}} \label{zemb}

\begin{Emp}
{\bf Proof of part~(a).} We are going to deduce the result from a combination of
\cite[Theorem~IX.0.5(iii),(v)]{FS} and \cite[Fact~5.5 and Corollary~5.13]{Ka}.

\smallskip

(i) By \cite[Fact~5.5]{Ka}, we have $G_1(F) = Z(G_1)(F)\cdot G(F)$.

\smallskip

(ii) For every $\pi\in\Pi^{FS}_{\varphi_1}$, the restriction $\pi|_{G(F)}$ is irreducible by part~(i). Thus, by \cite[Theorem~IX.0.5(v)]{FS}, we have $\pi|_{G(F)}\in \Pi^{FS}_{\varphi}$.

\smallskip

(iii) By \cite[Theorem~IX.0.5(iii)]{FS}, for every $\pi',\pi''\in\Pi^{FS}_{\varphi_1}$ we have an equality $\pi'|_{Z(G_1(F))}=\pi''|_{Z(G_1(F))}$. Hence, by part~(i), we conclude that $\pi'\simeq \pi''$ if and only if $\pi'|_{G(F)}\simeq\pi''|_{G(F)}$.

\smallskip

(iv) It remains to show that for every  $\pi\in\Pi^{FS}_{\varphi}$ there exists  $\pi_1\in\Pi^{FS}_{\varphi_1}$ such that
$\pi_1|_{G(F)}=\pi$. Let $z_{\varphi_1}:Z(G_1(F))\to\qlbar^{\times}$ be the character, corresponding to $\varphi_1$. Then, by
\cite[Theorem~IX.0.5(iii)]{FS}, we have $\pi|_{Z(G(F))}= z_{\varphi_1}|_{Z(G(F))}\cdot\Id$. Hence, by part~(i), there exists
a unique $\pi'\in \Irr(G_1(F))$ such that $\pi'|_{G(F)}=\pi$ and $\pi'|_{Z(G_1(F))}= z_{\varphi_1}\cdot\Id$. It remains to check that the $L$-parameter $\varphi_{\pi'}$ equals $\varphi_1$.

\smallskip

Since $\pi'|_{G(F)}=\pi$, we deduce from \cite[Theorem~IX.0.5(v)]{FS} that $\varphi_{\pi'}$ is also a lift $\varphi$. Consider torus $C:=G_1/G=Z(G_1)/Z(G)$. Then, by \cite[Corollary~5.13]{Ka}, $\varphi_{\pi'}$ differs from $\varphi_1$ by an element of
$\al\in H^1(W_F,\wh{C})$.

\smallskip

On the other hand, since $\pi'|_{Z(G_1(F))}=\pi_1|_{Z(G_1(F))}$, we conclude from
\cite[Theorem~IX.0.5(iii)]{FS} that the image of $\al$ in $H^1(W_F,\wh{Z(G_1)})$ is zero. Hence $\al=0$, because the projection
$\pr:Z(G_1)(F)\to C(F)$ is surjective (by \cite[Fact~5.5]{Ka}), and thus the induced map $\pr^*:H^1(W_F,\wh{C})\to H^1(W_F,\wh{Z(G_1)})$ is injective.
\end{Emp}

\begin{Emp}
{\bf Proof of part~(b).} The assertion follows by a small modification of the argument of \cite[Theorem~IX.6.1]{FS}: By the construction (see Section~\re{main}) and observation of Section~\re{compat}(b), it remains to show that for every homomorphism $\eta:G\to G_1$ of reductive groups over $F$ inducing an isomorphism of adjoint groups, the pullback $\eta^*:D(\Bun_{G_1})\to D(\Bun_{G})$ is compatible with 
with Hecke actions.
\end{Emp}

In other words, in the notation of \cite[Theorem~IX.6.1]{FS} the assertion follows from the following claim.

\begin{Cl} \label{C:FS}
For every finite set $I$, every representation $V\in \Rep_{\qlbar}((\wh{G}\ltimes Q)^I)$ and every $A\in D(\Bun_{G_1})$, we have a canonical isomorphism
\begin{equation} \label{Eq:FS}
T_V(\eta^*A)\simeq\eta^*T_{\wh{\eta}^*V}(A)\in D(\Bun_G\times(\on{Div}^1)^I).
\end{equation}
\end{Cl}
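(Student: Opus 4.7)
The plan is to unfold the definition of the Hecke operator $T_V$ as a cohomological correspondence on the local Hecke stack, and then invoke functoriality of the Hecke stack and of the Satake sheaves under the group homomorphism $\eta$. Recall that for a finite set $I$ and $V\in\Rep_{\qlbar}((\wh{G}\ltimes Q)^I)$, the operator $T_V$ is built from the correspondence
\[
\Bun_G \xleftarrow{\;h^G_1\;} \on{Hck}^I_G \xrightarrow{\;(h^G_2,\pi^G)\;} \Bun_G\times(\on{Div}^1)^I
\]
together with the geometric Satake sheaf $S^G_V$ on $\on{Hck}^I_G$, via the formula $T_V(A)\simeq (h^G_2,\pi^G)_{\natural}\bigl((h^G_1)^* A \otimes S^G_V\bigr)$, as in \cite[$\S$IX.2]{FS}. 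The map $\eta:G\to G_1$ induces morphisms $\Bun_\eta:\Bun_G\to \Bun_{G_1}$ (along which $\eta^*$ pulls back) and $\on{Hck}^I_\eta:\on{Hck}^I_G\to \on{Hck}^I_{G_1}$, forming squares that are Cartesian after base change to $\Bun_G$; this is where I would use that $\eta$ is an isomorphism on adjoint groups, so that the relevant fibers of the Beilinson--Drinfeld affine Grassmannians for $G$ and $G_1$ agree.

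Given this setup, I would argue in two steps. First, apply proper base change for $(h_2,\pi)_{\natural}$ along the Cartesian squares comparing the Hecke correspondences for $G$ and $G_1$; this yields
\[
\eta^*T_{\wh{\eta}^* V}(A)\simeq (h^G_2,\pi^G)_{\natural}\bigl((h^G_1)^* \eta^* A\otimes (\on{Hck}^I_\eta)^* S^{G_1}_{\wh{\eta}^* V}\bigr),
\]
while by definition $T_V(\eta^* A)\simeq (h^G_2,\pi^G)_{\natural}\bigl((h^G_1)^* \eta^* A \otimes S^G_{V}\bigr)$. Thus the claim reduces to producing a canonical isomorphism of Satake sheaves
\[
(\on{Hck}^I_\eta)^* S^{G_1}_{\wh{\eta}^* V}\simeq S^G_{V}.
\]
This is the naturality of the geometric Satake equivalence of \cite[$\S$VI]{FS} under the dual homomorphism $\wh{\eta}:\wh{G_1}\ltimes Q\to \wh{G}\ltimes Q$: pulling the Satake sheaf back along $\on{Hck}^I_\eta$ corresponds to restricting the representation along $\wh{\eta}$.

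The step I expect to be the main obstacle is exactly this Satake compatibility, since one must verify that the tensor, fusion, commutativity, and $W_F$-equivariance constraints all transport correctly under $\on{Hck}^I_\eta$. However, this naturality is essentially built into the construction of the Satake functor in \cite{FS}, and once it is granted the isomorphism \form{FS} is formal, and follows the same pattern as the proof of \cite[Theorem IX.6.1]{FS}.
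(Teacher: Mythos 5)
The strategy you propose — unfold $T_V$ as a cohomological correspondence, compare the Hecke correspondences for $G$ and $G_1$ by base change, and reduce to compatibility of the geometric Satake sheaves under $\eta$ — is the same strategy the paper follows (repeating the argument of \cite[Theorem~IX.6.1]{FS}). However, there is a genuine gap in the step where you claim the comparison squares are "Cartesian after base change to $\Bun_G$" because "the relevant fibers of the Beilinson--Drinfeld affine Grassmannians for $G$ and $G_1$ agree." This is false: even though $\eta$ induces an isomorphism on adjoint groups, the closed immersion $\Gr_G\hra\Gr_{G_1}$ is \emph{not} an isomorphism — it is an isomorphism onto a union of connected components, and the set of components is controlled by $\pi_1(G)\hra\pi_1(G_1)$, which is a proper inclusion precisely in the situation of interest (a $z$-embedding with $Z(G)$ disconnected). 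Consequently, $\on{Hck}^I_G$ is not the fiber product $\on{Hck}^I_{G_1}\times_{\Bun_{G_1}}\Bun_G$, and the base-change step you invoke to move the pushforward past $\eta^*$ does not apply directly.

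The paper addresses exactly this point by factoring the right leg $h_r$ of the Hecke correspondence for $G$ through the genuine fiber product:
\[
\on{Hck}^I_G\ \overset{h'_r}{\lra}\ \on{Hck}^I_{G_1}\times_{\Bun_{G_1}}\Bun_{G}\ \overset{h''_r}{\lra}\ \Bun_G\times(\on{Div}^1)^I,
\]
applying base change only along the square with $h''_r$ and $\wt\eta$ (which \emph{is} Cartesian), and the projection formula along $h'_r$. The Satake-compatibility statement that then carries the weight is
\[
h'_{r\natural}(S_V)\ \simeq\ \wt\eta^*\bigl(S_{\wh\eta^*V}\bigr),
\]
an identity of sheaves on $\on{Hck}^I_{G_1}\times_{\Bun_{G_1}}\Bun_{G}$ involving the \emph{left adjoint pushforward} $h'_{r\natural}$ (extension by zero from a union of components), not merely a pullback along $\on{Hck}^I_\eta$. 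This is where the support condition — that $S_{\wh\eta^*V}$ is supported on the components of $\Gr_{G_1}$ in the image of $\Gr_G$ — actually enters. Your proposed identity $(\on{Hck}^I_\eta)^*S^{G_1}_{\wh\eta^*V}\simeq S^G_V$ is a consequence of this, but by itself it does not let you complete the base-change manipulation. To repair the argument you should replace the claimed Cartesian square by the factorization above and replace the pullback Satake compatibility by the $\natural$-pushforward version.
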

\begin{proof}
To prove the result, we repeat the argument of \cite[Theorem~IX.6.1]{FS} almost word-by-word. Consider commutative diagram
\begin{equation*} 
\begin{CD}
\Bun_{G} @<<h_l< \on{Hck}^I_G @>>h'_r> \on{Hck}^I_{G_1}\times_{\Bun_{G_1}}\Bun_{G} @>>h''_r> \Bun_G\times(\on{Div}^1)^I \\
@V\eta VV                  @V\eta_H VV               @V\wt{\eta} VV                        @V\eta VV \\
\Bun_{G_1} @<h_{l1}<<\on{Hck}^I_{G_1} @= \on{Hck}^I_{G_1} @>h_{r1}>> \Bun_{G_1}\times(\on{Div}^1)^I.
\end{CD}
\end{equation*}
Then, as in the proof of \cite[Theorem~IX.6.1]{FS}, we have the following isomorphisms
\[
T_V(\eta^*A)= h''_{r\natural}h'_{r\natural}(h_l^*\eta^*A\sotimes S_V)\simeq
h''_{r\natural}h'_{r\natural}(\eta_H^* h_{l1}^*A\sotimes S_V){\simeq} \]\[
{\simeq} h''_{r\natural}h'_{r\natural}(h'^*_{r\natural}\wt{\eta}^*h_{l1}^*A\sotimes S_V)\overset{(1)}{\simeq}
h''_{r\natural}
(\wt{\eta}^*h_{l1}^*A\sotimes h'_{r\natural}(S_V))\overset{(2)}{\simeq}
h''_{r\natural}
(\wt{\eta}^*h_{l1}^*A\sotimes \wt{\eta}^*(S_{\wh{\eta}^*V})){\simeq}
\]\[
{\simeq} h''_{r\natural}
\wt{\eta}^*(h_{l1}^*A\sotimes S_{\wh{\eta}^*V})\overset{(3)}{\simeq} \eta^*h_{1r\natural}(h_{l1}^*A\sotimes S_{\wh{\eta}^*V})=
\eta^*T_{\wh{\eta}^*V}(A),
\]
where isomorphism $(1)$ follows from the projection formula, isomorphism (3) from the base change isomorphism, and isomorphism (2) from isomorphism $h'_{r\natural}(S_V)){\simeq} \wt{\eta}^*(S_{\wh{\eta}^*V})$ obtained from the compatibility of the geometric Satake equivalence with $\eta$.
\end{proof}

\section{Elliptic virtual characters, and proof of Propositions~\ref{P:ell}} \label{ell}
In this and the next section, we choose a field isomorphism $\qlbar\isom\B{C}$ and consider representations to be $\B{C}$-valued rather than $\qlbar$-valued.

\begin{Emp} \label{E:ellvirt}
{\bf Elliptic virtual characters.}
(a) Let $\Theta_{\on{ell}}(G(F))\subset \Theta(G(F))$ be the linear space of characters $\chi\otimes\mu$, where $\chi\in \Theta(G(F))$  is an elliptic tempered character in the sense of Arthur \cite{Ar1}, and $\mu:G(F)\to \B{R}_{>0}$ is a homomorphism.

\smallskip

(b) By definition, a character $\chi_{\pi}$ of every supercuspidal representation $\pi$ belongs to $\Theta_{\on{ell}}(G(F))$. More generally, the same is true for every $\pi\in\Irr(G(F))$, whose restriction to $G^{\on{der}}(F)$ is square-integrable.

\smallskip

(c) It follows from \cite[Section~6]{Ar1} that every nonzero $\chi\in \Theta_{\on{ell}}(G(F))$ has a nonzero restriction to
$G(F)^{\on{ell}}$.

\smallskip

(d) We denote by $\Psi_{\on{ell}}(G)$ the set of equivalences classes of elliptic endoscopic data for $G$.

\smallskip

(e) For every $\C{E}\in \Psi_{\on{ell}}(G)$, we denote by $\Theta_{\C{E}-\st}(G(F))\subseteq \Theta(G(F))$ the subspace of $\C{E}$-stable generalized functions and set
\[
\Theta_{\C{E}}(G(F)):=\Theta_{\on{ell}}(G(F))\cap \Theta_{\C{E}-\st}(G(F))\subseteq\Theta(G(F)).
\]
\end{Emp}

Using observation of Section~\re{ellvirt}(b) above, \rp{ell} is a particular case of part~(b) of the following assertion: 

\begin{Prop} \label{P:end}
(a) We have a decomposition
\[
\Theta_{\on{ell}}(G(F))=\bigoplus_{\C{E}\in\Psi_{\on{ell}}(G)}\Theta_{\C{E}}(G(F)).
\]

\smallskip

(b) Let $\C{E}$ be an elliptic endoscopic triple of $G$, and let $\chi\in\Theta_{\on{ell}}(G(F))$ be such that the restriction $\chi^{\on{ell}}=\chi|_{G(F)^{\on{ell}}}\in C^{\infty}_{\inv}(G(F)^{\on{ell}})$ is $\C{E}$-stable. Then $\chi$ is $\C{E}$-stable.
\end{Prop}

\begin{proof}
(a) First we claim that every $\chi\in \Theta_{\on{ell}}(G(F))$ can be written as a finite sum $\chi=\sum_{\C{E}\in\Psi_{\on{ell}}(G)}\chi_{\C{E}}$ such that $\chi_{\C{E}}\in\Theta_{\on{ell}}(G(F))$ is $\C{E}$-stable for every $\C{E}\in\Psi_{\on{ell}}(G)$. If $\chi\in \Theta(G(F))$  is an elliptic tempered character in the sense of Arthur, this follows from a deep theorem of Arthur \cite[Theorem~XI.4]{MW}, from which the general case follows by twisting.

\smallskip

It remains to show that the subspaces  $\{\Theta_{\C{E}}(G(F))\}_{\C{E}\in\Psi_{\on{ell}}(G)}\subseteq \Theta(G(F))$ are linearly independent.
Using Section~\re{ellvirt}(c), the assertion follows from \rl{linend} below.

\smallskip

(b) By  part (a), invariant generalized function $\chi$ can be written as a finite sum
$\chi=\sum_{\C{E'}\in\Psi_{\on{ell}}(G)}\chi_{\C{E}'}$, where each $\chi_{\C{E}'}\in \Theta_{\on{ell}}(G(F))$ is $\C{E}'$-stable.
It remains to show that $\chi_{\C{E}'}=0$ for every $\C{E}'\not\simeq\C{E}$.

\smallskip

Restricting to $G(F)^{\on{ell}}$, we get an identity
%
$\chi^{\on{ell}}=\sum_{\C{E'}}\chi_{\C{E}'}^{\on{ell}}\in C^{\infty}_{\inv}(G(F)^{\on{ell}})$,
%
and each $\chi_{\C{E}'}^{\on{ell}}$ is $\C{E}'$-stable. Since $\chi^{\on{ell}}$ is $\C{E}$-stable by assumption, we conclude from \rl{linend} below that $\chi_{\C{E}'}^{\on{ell}}=0$ for every $\C{E}'\not\simeq\C{E}$. Then $\chi_{\C{E}'}=0$ by the observation of Section~\re{ellvirt}(c).
\end{proof}

\begin{Lem} \label{L:linend}
The subspaces $\{C^{\infty}_{\C{E}-\st}(G(F)^{\on{ell}})\}_{\C{E}\in\Psi_{\on{ell}}(G)}\subseteq C^{\infty}_{\inv}(G(F)^{\on{ell}})$ are linearly independent.
\end{Lem}

\begin{proof}
Unwinding definitions (see Sections~\re{estable}(b),(c)) we have to show that for every $\gm\in G(F)^{\on{ell}}$, the non-zero subspaces
\[
\{\Span_{\qlbar}\on{Fun}([\on{Orb}^{\st}_{\gm}])_{\xi,(\gm,\xi)\in\C{E}}\}_{\C{E}}\subseteq \on{Fun}([\on{Orb}^{\st}_{\gm}])
\]
are linearly independent. Since subspaces $\on{Fun}([\on{Orb}^{\st}_{\gm}])_{\xi}$ are linearly independent,
this follows from Remark~\re{stable}(d).
\end{proof}

\appendix
\section{Endoscopic decomposition over $\B{C}$} \label{end}

\begin{Emp} \label{E:cons}
{\bf Construction.} (a) Let $M\subseteq G$ be a Levi subgroup. Every endoscopic datum $\C{E}=(s,\wh{H},\rho)$ for $M$ gives rise to an endoscopic datum $\C{E}_G=(\wt{s},\wh{H},\rho)$ for $G$, unique up to equivalence:

\smallskip

Namely, the inclusion $M\hra G$ gives rise to a $\Gm$-invariant conjugacy class of inclusions $\wh{M}\hra\wh{G}$, hence of inclusions  $\wh{M}/Z(\wh{G})\hra\wh{G}^{\ad}$. Then, for every generic lift $\wt{s}\in \wh{M}/Z(\wh{G})\subseteq\wh{G}^{\ad}$ of $s\in\wh{M}^{\ad}=\wh{M}/Z(\wh{M})$, we have $Z_{\wh{G}}(\wt{s})^0=Z_{\wh{M}}(s)^0=\wh{H}$, and we define $\C{E}_G$ to be
$(\wt{s},\wh{H},\rho)$.

\smallskip

(b) Set $W(M):=N_{G}(M)/M$. Then for every $w\in W(M)$ we can form another endoscopic datum  $w(\C{E})$ for $M$, and,
by construction, endoscopic data $\C{E}_G$ and $w(\C{E})_G$ from part~(a) are equivalent.
\end{Emp}

\begin{Emp} \label{E:relevant}
{\bf Relevant endoscopic data.} (a) We say that an endoscopic datum $\C{E}$ is {\em relevant}, if there exists  $(\gm,\xi)\in\C{E}$ (see Section~\re{stable}(a)).

\smallskip

(b) Note that every endoscopic datum $\C{E}$ is relevant, if $G$ is quasi-split. Moreover, $\C{E}$ is relevant if and only if the space $\wh{C}_{\C{E}-\st}(G(F))$ is nonzero. Furthermore, this happens if and only if there exists a Levi subgroup $M\subseteq G$ and an elliptic endoscopic datum $\C{E}'$ for $M$ such that $\C{E}\simeq\C{E}'_G$. In this case, $M:=M_{\C{E}}$ is unique up to a conjugacy, and the equivalence class of $\C{E}'$ is unique up to a $W(M)$-conjugacy.
\end{Emp}

\begin{Emp} \label{E:notation}
{\bf Notation.}
(a) We denote by $\Psi(G)$ the set of equivalences classes of endoscopic data for $G$.

\smallskip

(b) For every $\C{E}\in\Psi(G)$, we denote by $\Theta_{\C{E}-\st}(G(F))\subseteq\Theta(G(F))$ the subspace of $\C{E}$-stable generalized functions.

\smallskip

(c) For every Levi subgroup $M\subseteq G$, we denote by
$\iota_{M}^G:\Theta(M(F))\to \Theta(G(F))$ the normalized parabolic induction, and set
\[
\Theta_{M-\on{ell}}(G(F)):=\iota_{M}^G(\Theta_{\on{ell}}(M(F)))\subseteq\Theta(G(F)).
\]

\smallskip

(d) For every endoscopic datum $\C{E}$ for $G$, we set
\[
\Theta_{\C{E}-\on{ell}}(G(F)):=\Theta_{M_{\C{E}}-\on{ell}}(G(F))\subseteq\Theta(G(F)),
\]
if $\C{E}$ is relevant and $M_{\C{E}}\subseteq G$ is the Levi subgroup defined in Section~\re{relevant}(b), and zero subspace, otherwise.
In particular, we have $\Theta_{\C{E}-\on{ell}}(G(F))=\Theta_{\on{ell}}(G(F))$, if $\C{E}\in\Psi_{\on{ell}}(G)$.

\smallskip

(e) For every endoscopic datum $\C{E}$ for $G$, we set
\[
\Theta_{\C{E}}(G(F)):=\Theta_{\C{E}-\on{ell}}(G(F))\cap \Theta_{\C{E}-\st}(G(F))\subseteq\Theta(G(F)).
\]
\end{Emp}










The following result, which  we learned from Waldspurger, is due to Arthur. For completeness, we will sketch its proof in
Section~\re{pfdec}.

\begin{Prop} \label{P:dec}
(a) For every Levi subgroup $M\subseteq G$, the map $\iota_{M}^G$ induces an isomorphism
\[
\iota_{M}^G:\Theta_{\on{ell}}(M(F))^{W(M)}\isom\Theta_{M-\on{ell}}(G(F)).
\]

\smallskip

(b) We have a decomposition
\[
\Theta(G(F))=\bigoplus_{M}\Theta_{M-\on{ell}}(G(F)),
\]
where $M$ runs over the set of conjugacy classes of Levi subgroups in $G$.
\end{Prop}

The following corollary of \rp{end} will be proven in Section~\re{pfdec1}.

\begin{Prop} \label{P:dec1}
For every Levi subgroup $M\subseteq G$, we have a decomposition
\[
\Theta_{M-\on{ell}}(G(F))=\bigoplus_{\C{E}\in\Psi_{\on{ell}}(M)/W(M)}\Theta_{\C{E}_G}(G(F)).
\]
\end{Prop}



We are ready to formulate and to prove the main result of this section.

\begin{Thm} \label{T:dec}
We have an endoscopic decomposition
\[
\Theta(G(F))=\bigoplus_{\C{E}\in\Psi(G)}\Theta_{\C{E}}(G(F)).
\]
\end{Thm}

\begin{proof}
Combining Propositions~\ref{P:dec}(b) and \ref{P:dec1}, we have a decomposition
\[
\Theta(G(F))=\bigoplus_M\bigoplus_{\C{E}\in\Psi_{\on{ell}}(M)/W(M)}\Theta_{\C{E}_G}(G(F))
\]
from which the assertion follows by the observations of Section~\re{relevant}(b).
\end{proof}

\begin{Emp}
{\bf Remark.} Notice that the decomposition of \rt{dec} is more refined than the one people usually consider.
For example, the stable part of $\Theta(G(F))$ corresponds to a direct sum  $\bigoplus_M\Theta_{\C{E}_{M,\on{triv}}}(G(F))$,
where $M$ run over the set of conjugacy classes of Levi subgroups $M$ of $G$.
\end{Emp}

We will need basic properties of the normalized parabolic induction, which we remind now.

\begin{Emp} \label{E:par}
{\bf Normalized parabolic induction.} Let $M\subseteq G$ be a Levi subgroup.

\smallskip

(a) The normalized parabolic induction $\iota_{M}^G:\Theta(M(F))\to \Theta(G(F))$
extends to a continuous linear map $\iota_{M}^G:\wh{C}_{\inv}(M(F))\to \wh{C}_{\inv}(G(F))$
(see, for example, \cite[Corollaries~6.3 and ~6.6(a)]{KV2}).

\smallskip

(b) For every $\gm\in G(F)$, we set $\Dt_{M,G}(\gm)=\det(\Ad_{\gm}^{-1}-\Id,\Lie G/\Lie H)$.
Then for every $\gm\in G^{\sr}(F)\cap M(F)^{\on{ell}}$ and $\chi\in \Theta(M(F))$, we have an equality
\[
\iota_{M}^G(\chi)(\gm)=|\Dt_{M,G}(\gm)|^{-1/2}\cdot \sum_{w\in W(M)}\chi(w(\gm)).
\]
For example, it can deduced from \cite[Corollary~6.4]{KV2} with $H=M$.

\smallskip

(c) As in Section~\re{stable}(e), for every $\gm\in G^{\sr}(F)\cap M(F)\subseteq M^{\sr}(F)$, we can consider
orbital integrals $O_{M,\gm}\in \wh{C}_{\inv}(M(F))$ and $O_{G,\gm}\in \wh{C}_{\inv}(G(F))$. Then, as it is shown
for example, in \cite[Corollary~6.11]{KV2}, we have an equality
\[
\iota_{M}^G(O_{M,\gm})=|\Dt_{M,G}(\gm)|^{-1/2}\cdot O_{G,\gm}.
\]

\smallskip

(d) For every $\gm\in G^{\sr}(F)\cap M(F)\subseteq M^{\sr}(F)$, we have equality $G_{\gm}=M_{\gm}$. Moreover, the
natural homomorphism $(\wh{G}_{\gm})^{\Gm}/Z(\wh{G})^{\Gm}\to(\wh{M}_{\gm})^{\Gm}/Z(\wh{M})^{\Gm}$ induces an isomorphism
$\pi_0((\wh{G}_{\gm})^{\Gm}/Z(\wh{G})^{\Gm})\isom\pi_0((\wh{M}_{\gm})^{\Gm}/Z(\wh{M})^{\Gm})$.

\smallskip

(e) By part~(d), for every $\ov{\xi}\in\pi_0((\wh{G}_{\gm})^{\Gm}/Z(\wh{G})^{\Gm})=\pi_0((\wh{M}_{\gm})^{\Gm}/Z(\wh{M})^{\Gm})$,
one can consider $\ov{\xi}$-orbital integrals $O^{\ov{\xi}}_{M,\gm}\in \wh{C}_{\inv}(M(F))$ and $O^{\ov{\xi}}_{G,\gm}\in \wh{C}_{\inv}(G(F))$.
Furthermore, it follows from part~(c) that we have an identity
\[
\iota_{M}^G(O^{\ov{\xi}}_{M,\gm})=|\Dt_{M,G}(\gm)|^{-1/2}\cdot O^{\ov{\xi}}_{G,\gm}.
\]
\end{Emp}

\begin{Emp} \label{E:rem}
{\bf Remarks.} Let $M\subseteq G$ be a Levi subgroup, $\gm\in G^{\sr}(F)\cap M(F)\subseteq M^{\sr}(F)$, and let $\C{E}\in\Psi(M)$.

\smallskip

(a) By the construction of Section~\re{cons}, for every  $\xi\in (\wh{G}_{\gm})^{\Gm}/Z(\wh{M})^{\Gm}$ such that $(\gm,\xi)\in \C{E}$
there exists a lift $\wt{\xi}\in (\wh{G}_{\gm})^{\Gm}/Z(\wh{G})^{\Gm}$ of $\xi$ such that $(\gm,\wt{\xi})\in \C{E}_G$.

\smallskip

(b) Conversely, assume that $\gm\in G^{\sr}(F)\cap M(F)^{\on{ell}}$. Then for every  $\xi\in (\wh{G}_{\gm})^{\Gm}/Z(\wh{G})^{\Gm}$ such that $(\gm,\xi)\in \C{E}_G$ there exists $w\in W(M)$ such that the image   $\ov{\xi}\in (\wh{G}_{\gm})^{\Gm}/Z(\wh{M})^{\Gm}$ satisfies
$(\gm,\xi)\in w(\C{E})$.
\end{Emp}

\begin{Prop} \label{P:parind}
For every Levi subgroup $M\subseteq G$ and $\C{E}\in\Psi(M)$ we have an inclusion
\[
\iota_M^G(\wh{C}_{\C{E}-\st}(M(F)))\subseteq \wh{C}_{\C{E}_G-\st}(G(F)).
\]
\end{Prop}
\begin{proof}
The proof is a straightforward generalization of the argument of \cite[Corollary~6.13]{KV2}, where the stable case is proven.
Note that $\wh{C}_{\C{E}-\st}(M(F))$ is the closure of the linear span of $\{O_{M,\gm}^{\ov{\xi}}\}_{(\gm,\xi)}$, where $\gm\in G^{\sr}(F)\cap M(F)\subseteq M^{\sr}(F)$ and $\xi\in (\wh{G}_{\gm})^{\Gm}/Z(\wh{M})^{\Gm}$ such that $(\gm,\xi)\in \C{E}$.
Since $\iota_M^G$ is continuous, it suffices to show that for every such $(\gm,\xi)$ we have
$\iota_M^G(O_{M,\gm}^{\ov{\xi}})\in \wh{C}_{\C{E}_G-\st}(G(F))$. Then, by the identity of Section~\re{par}(e), we have to show that
$O_{G,\gm}^{\ov{\xi}}\in \wh{C}_{\C{E}_G-\st}(G(F))$. The latter assertion follows
by the observation of Section~\re{rem}(a).
\end{proof}

\begin{Emp} \label{E:pfdec}
\begin{proof}[Proof of \rp{dec}]
We have to show that every $\chi\in\Theta(G(F))$ can be written uniquely as a finite sum $\chi=\sum_{M}\iota_M^G(\chi_M)$ with
$\chi_M\in \Theta_{\on{ell}}(M(F))^{W(M)}$ for all $M$.

\smallskip

When $\chi$ belongs to the span of characters of tempered representations of $G(F)$, the existence of the $\chi_M$'s is shown
for example in \cite[Proposition~I, 2.12]{MW2}. The existence of the $\chi_M$'s in general now follows from the fact that character of each
$\pi\in \Irr(G(F))$ can be written as a linear combination of characters of
representations of the form $\iota^G_M(\pi_M\otimes\mu_M)$, where $\pi_M\in\Irr(M(F))$ is tempered, and $\mu_M$ is a character
$M(F)\to\B{R}_{>0}$.  

\smallskip

To show the uniqueness of the $\chi_M$'s, we have to show that for every collection $\chi_M\in \Theta_{\on{ell}}(M(F))^{W(M)}$ such that
$\sum_{M}\iota_M^G(\chi_M)=0$ we have $\chi_M=0$ for all $M$.

\smallskip

Assuming this is not the case, we choose a Levi subgroup $L\subseteq G$
such that $\chi_{L}\neq 0$ and such that the dimension of $Z(L)$ is minimal. Since $\chi_{L}\in \Theta_{\on{ell}}(L(F))^{W(L)}$, we conclude that the
restriction $\chi^{\on{ell}}_L\in C^{\infty}_{\inv}(L(F)^{\on{ell}})$ is non zero (by Section~\re{ellvirt}(c)). Thus, there exists an element $\gm\in L(F)^{\on{ell}}\cap G(F)^{\sr}$ such that $\chi_L(\gm)\neq 0$. Then, by Section~\re{par}(b), we conclude that $\iota_L^G(\chi_L)(\gm)\neq 0$.

\smallskip

Since $\sum_{M}\iota_M^G(\chi_M)=0$, there exists at least one Levi subgroup $M$, not conjugate to $L$, such that $\iota_M^G(\chi_M)(\gm)\neq 0$.
Then, by the formula for the induced representation, $M$ must contain a conjugate of $\gm$. Since $\gm\in L(F)^{\on{ell}}$,
$M$ must then contain a conjugate of $L$. Hence $\chi_M\neq 0$, and $\dim Z(M)<\dim Z(L)$, contradicting our assumption on $L$.
\end{proof}
\end{Emp}

\begin{Emp} \label{E:pfdec1}
\begin{proof}[Proof of \rp{dec1}] By a combination of Propositions~\ref{P:end} and ~\ref{P:parind}, every $\chi\in\Theta_{M-\on{ell}}(G(F))$
can be written as a finite sum $\chi=\sum_{\C{E}\in\Psi_{\on{ell}}(M)}\chi_{\C{E}}$ such that
$\chi_{\C{E}}\in \Theta_{\C{E}_G}(G(F))$ for all $\C{E}$.
\smallskip

It remains to show that the subspaces $\{\Theta_{\C{E}_G}(G(F))\}_{\C{E}}\subseteq \Theta_{M-\on{ell}}(G(F))$ are linearly independent.
By a combination of \rp{end}(a) and Section~\re{ellvirt}(c), the restriction map $\Theta_{M-\on{ell}}(G(F))\to C^{\infty}(M(F)\cap G^{\sr}(F))$ is injective. Then, assertion follows by the same argument as  \rp{dec}(a) using Section~\re{rem}(b).
\end{proof}
\end{Emp}

\section{The case of an arbitrary field of coefficients} \label{S:arb}

Let $K$ be an arbitrary algebraically closed field of characteristic zero. Combining results of the previous section and a recent work \cite{KSV}, in this section we are going to construct Arthur's space of virtual elliptic characters and endoscopic decomposition over $K$.

\begin{Emp} \label{E:setup}
{\bf General observations.} 
 (a) We denote by $\Irr_K(G(F))$ the set of isomorphism classes of smooth irreducible representations $G(F)$ over $K$ and by $\wh{C}_{\inv}(G(F),K)$ be the space of $K$-valued invariant generalized functions on $G(F)$.

\smallskip

(b) We claim that every $\pi\in \Irr_K(G(F))$ is admissible, and hence we can consider its character $\chi_{\pi}\in \wh{C}_{\inv}(G(F),K)$.
Indeed, the argument of \cite[Theorem~3.4]{KSV}, where the corresponding assertion was proven $K=\ov{\B{Q}}$, applies word-by-word. 
We denote by $\Theta_K(G(F))\subseteq \wh{C}_{\inv}(G(F),K)$ the linear span of $\{\chi_{\pi}\}_{\pi\in \Irr_K(G(F))}$.

\smallskip

(c) Arguing as in \cite[Proposition~3.5]{KSV} word-by-word, it follows from part~(b) that for every algebraically closed subfield $L\subseteq K$ and $\pi\in \Irr_{L}(G(F))$ the representation $\pi_K:=\pi\otimes_{L}K$ is irreducible and that $\pi'_K\not\simeq \pi''_K$ if $\pi'\not\simeq \pi''$. Therefore the map $\pi\mapsto \pi_K$ induces an injective map $\Theta_{L}(G(F))\hra\Theta_K(G(F))$.

\smallskip

(d) Since the Hecke algebra of $G$ is of countable dimension, for every representation $\pi\in \Irr_{K}(G(F))$ there exists a countable subfield
$L\subseteq K$ such that $\pi$ is obtained by a base change from certain representation in $\Irr_{L}(G(F))$. In particular, the space
$\Theta_K(G(F))$ can be written as a filtered union $\bigcup_{L\subseteq K} \Theta_{L}(G(F))$, taken over countable algebraically closed subfields of $K$.





\end{Emp}

\begin{Emp} \label{E:arthur}
{\bf Elliptic virtual characters.} For every algebraically closed field $K$ of characteristic zero we define
a subspace $\Theta_{\on{ell},K}(G(F))\subseteq \Theta_{K}(G(F))$ as follows:

\smallskip

(a) If $K=\B{C}$, we denote $\Theta_{\on{ell},\B{C}}(G(F))\subseteq \Theta_{\B{C}}(G(F))$ the subspace
$\Theta_{\on{ell}}(G(F))\subseteq \Theta(G(F))$ from Section~\re{ellvirt}(a).

\smallskip

(b) Note that the Galois group $\Gal(\B{C}/\B{Q})$ acts on $\Theta_{\B{C}}(G(F))$ by the action
$\si(\sum_i a_i\chi_{\pi_i})=\sum_i \si(a_i)\chi_{\si(\pi_i)}$. Moreover, using remark after \cite[Lemma~5.8]{KSV} it follows from \cite[Theorem~1.3.(2)]{KSV} that
the subspace $\Theta_{\on{ell},\B{C}}(G(F))\subseteq \Theta_{\B{C}}(G(F))$ is $\Gal(\B{C}/\B{Q})$-invariant.

\smallskip

(c) Assume first that $K$ is countable. Then there exists a field embedding $\iota:K\hra \B{C}$ and we define
$\Theta_{\on{ell},K}(G(F))\subseteq \Theta_{K}(G(F))$ to be $\Theta_{K}(G(F))\cap \Theta_{\on{ell},\B{C}}(G(F))$.
Notice that every two field embeddings $\iota:K\hra \B{C}$ differ by an automorphism of $\B{C}$. Thus it follows from part~(b) that
the subspace $\Theta_{\on{ell},K}(G(F))$ is independent of $\iota$. Moreover, for every algebraically closed subfield $L\subseteq K$,
we have an equality $\Theta_{\on{ell},L}(G(F))=\Theta_{\on{ell},K}(G(F))\cap \Theta_{L}(G(F))$.

\smallskip

(d) For an arbitrary $K$, we denote by $\Theta_{\on{ell},K}(G(F))\subseteq \Theta_{K}(G(F))$ the union $\bigcup_{L\subseteq  K}\Theta_{\on{ell},L}(G(F))$, taken over all countable algebraically closed subfields $L$. Notice that the collection of such subfields is filtered, so this union is a
vector subspace.
\end{Emp}

\begin{Emp} \label{E:enddec}
{\bf Endoscopic decomposition.} Using subspace  $\Theta_{\on{ell},K}(G(F))\subseteq \Theta_{K}(G(F))$ defined in Section~\re{arthur}(d),
the whole construction of Section~\re{notation} can be carried out:

\smallskip

(a) The dual group $\wh{G}$ is defined over $\ov{\B{Q}}$, thus over $K$. Next, the definition of endoscopic data is purely algebraic, so it makes sense over $K$. Moreover, the set $\Psi_K(G)$ of equivalences classes of endoscopic data for $G$ over $K$ does not depend of $K$ and
so will be denoted by $\Psi(G)$.

\smallskip

(b) Repeating the definition of Section~\re{estable} word-by-word, to every $\C{E}\in\Psi(G)$ we associate a subspace
$\Theta_{\C{E}-\st,K}(G(F))\subseteq\Theta_K(G(F))$ of $\C{E}$-stable generalized functions.

\smallskip

(c) Finally, repeating definitions of Section~\re{notation}(c)-(e), to every $\C{E}\in\Psi(G)$ we associate a subspace
\[
\Theta_{\C{E},K}(G(F))\subseteq\Theta_K(G(F)).
\]
\end{Emp}

\begin{Thm} \label{T:dec'}
We have an endoscopic decomposition
\[
\Theta_K(G(F))=\bigoplus_{\C{E}\in\Psi(G)}\Theta_{\C{E},K}(G(F)).
\]
\end{Thm}

\begin{proof}
Using observation of Section~\re{setup}(d) and the constructions of Sections~\re{arthur} and \re{enddec}, we can assume that $K$ is a countable subfield of $\B{C}$. Then, by \rt{dec}, every $\chi\in \Theta_K(G(F))$ can be written uniquely as a finite sum
\begin{equation*} \label{Eq:dec'}
\chi=\sum_{\C{E}\in\Psi(G)}\chi_{\C{E}}
\end{equation*}
such that $\chi_{\C{E}}\in \Theta_{\C{E},\B{C}}(G(F))$ for every $\C{E}\in\Psi(G)$, and we need to show that for every $\C{E}\in\Psi(G)$ we have
$\chi_{\C{E}}\in \Theta_{\C{E},K}(G(F))$.

For this it suffices to show that each subspace $\Theta_{\C{E},\B{C}}(G(F))\subseteq\Theta_{\B{C}}(G(F))$ is $\Gal(\B{C}/\B{Q})$-invariant, and
$\Theta_{\C{E},K}(G(F))=\Theta_{\C{E},\B{C}}(G(F))^{\Gal(\B{C}/K)}$.

\smallskip

Since $\Theta_{\C{E},\bullet}(G(F))=\Theta_{\C{E}-\st,\bullet}(G(F))\cap\Theta_{M_{\C{E}}-\on{ell},\bullet}(G(F))$, it suffices to show similar assertions for $\Theta_{\C{E}-\st,\bullet}(G(F))$ and $\Theta_{M_{\C{E}}-\on{ell},\bullet}(G(F))$. Next, by \rp{dec}(a), the assertion for  $\Theta_{M_{\C{E}}-\on{ell},\bullet}(G(F))$ follows from that for  $\Theta_{\on{ell},\bullet}(M(F))$. Finally, the assertions for
 $\Theta_{\C{E}-\st,\bullet}(G(F))$ and $\Theta_{\on{ell},\bullet}(G(F))$ follows from
\rl{gal}(b) below.
\end{proof}


\begin{Lem} \label{L:gal}
Let $K\subseteq\B{C}$ be an algebraically closed subfield.
\smallskip

(a) For every $\pi\in \Irr_{\B{C}}(G(F))$ not defined over $K$, the $\Gal(\B{C}/K)$-orbit of $\pi$ is infinite.

\smallskip

(b) We have an equality  $\Theta_{K}(G(F))=\Theta_{\B{C}}(G(F))^{\Gal(\B{C}/K)}$.
\end{Lem}

\begin{proof}
(a) Our argument is a small modification of \cite[Theorem~3.10]{KSV}, where a particular case is proven.

\smallskip

Assume first that $\pi$ is cuspidal. Then, by \cite[Lemma~3.8]{KSV}, there exists an unramified character $\theta:G(F)\to\B{C}^{\times}$ such that the central character $cc(\pi')$ of $\pi':=\pi\otimes\theta^{-1}$ has values in $\ov{\B{Q}}^{\times}$. Then, \cite[Proposition~3.7.(2)]{KSV} says that $\pi'$ is defined over $\ov{\B{Q}}$. Since $\pi\in \Irr_{\B{C}}(G(F))$ is not defined over $K$, this implies that $\theta:G(F)\to\B{C}^{\times}$ has values not in $K^{\times}$. Then the $\Gal(\B{C}/K)$-orbit of  $\theta$ is infinite, hence the $\Gal(\B{C}/K)$-orbit $\pi=\pi'\otimes\theta$ is infinite.

\smallskip

For a general $\pi$, choose a parabolic subgroup $P\subseteq G$ such that the Jacquet functor $r_P^G(\pi)$ is cuspidal and non-zero. Then for every irreducible quotient $\tau$ of $r_P^G(\pi)$, $\pi$ is an irreducible subrepresentation of the parabolic induction $i_P^G(\tau)$. Then, repeating the argument of \cite[Lemma~3.6]{KSV}, we see that $\tau$ is not defined over $K$. Then, by the proven above, the $\Gal(\B{C}/K)$-orbit of $\tau$ is infinite.
But since $r_P^G(\pi)$ is of finite length, we conclude that the $\Gal(\B{C}/K)$-orbit of $\pi$ is infinite as well.

\smallskip

(b) Let $\chi=\sum_{i=1}^n a_i\chi_{\pi_i}\in \Theta_{\B{C}}(G(F))^{\Gal(\B{C}/K)}$, where $\pi_1,\ldots, \pi_n\in \Irr_{\B{C}}(G(F)$ are pairwise non-isomorphic representations and $a_1,\ldots,a_n\in \B{C}$. We have to show that $\pi_i$ is defined over $K$ and $a_i\in K$ for all $i$.
For every $\si\in\Gal(\B{C}/K)$, we have an identity
\begin{equation} \label{Eq:galconj}
\sum_{i=1}^n \si(a_i)\chi_{\si(\pi_i)}={\si}(\chi)=\chi=\sum_{i=1}^n a_i\chi_{\pi_i}.
\end{equation}
Finally, combining part~(a), identity \form{galconj} and linear independence of characters, we conclude that each $\pi_i$ is defined over $K$ and each  $a_i$ belongs to $K$. 
\end{proof}

\end{document}